\theoremstyle{plain} %text of this environment is typesetted in italics
\theoremstyle{definition} %text of this environment is typesetted in roman letters
\newtheorem{thm}{Theorem}[section]
\newtheorem{lem}[thm]{Lemma}
\theoremstyle{definition}
\newtheorem{defn}{Definition}[section]
\theoremstyle{remark}
\newtheorem{rem}{Remark}[section]
\newcommand{\be}{\begin{equation}}
\newcommand{\ee}{\end{equation}}
\newcommand{\bea}{\begin{eqnarray}}
\newcommand{\eea}{\end{eqnarray}}
\newcommand{\ben}{\begin{eqnarray*}}
	\newcommand{\een}{\end{eqnarray*}}
\newcommand{\bt}{\begin{split}}
	\newcommand{\et}{\end{split}}
\newcommand{\bet}{\begin{equation}}
\newcommand{\mc}{\mathbb{C}}
\newcommand{\mr}{\mathbb{R}}
\newcommand{\ra}{\rightarrow}
\newcommand{\ddbar}{\partial \bar{\partial}}
\newcommand{\dbar}{\bar{\partial}}
\begin{document}

\title[Characterizations of plurisubharmonic functions]
{Characterizations of plurisubharmonic functions}

\author[F. Deng]{Fusheng Deng}
\address{Fusheng Deng: \ School of Mathematical Sciences, University of Chinese Academy of Sciences\\ Beijing 100049, P. R. China}
\email{fshdeng@ucas.ac.cn}
\author[J. Ning]{Jiafu Ning}%\textsuperscript{*} }
\address{Jiafu Ning: \ Department of Mathematics, Central South University, Changsha, Hunan 410083\\ P. R. China.}
\email{jfning@csu.edu.cn}
\author[Z. Wang]{Zhiwei Wang}%\textsuperscript{*}}
\address{ Zhiwei Wang: \ School
	of Mathematical Sciences\\Beijing Normal University\\Beijing\\ 100875\\ P. R. China}
\email{zhiwei@bnu.edu.cn}

\subjclass[2010]{32W05, 32U05, 32T99, 32A70}
%32U05=pluripotential theory-Plurisubharmonic functions and generalizations
% 32W05=differential operators in SCV-$\overline\partial$ and $\overline\partial$-Neumann operators
%32A70=hol functions SCV-Functional analysis techniques 
%32T99=Pseudoconvex domains-None of the above, but in this section
\keywords{Plurisubharmonic functions, H\" ormander's $L^2$-estimate, Ohsawa-Takegoshi extension theorem, characterization of plurisubharmonicity}

%\thanks{(*) The second author and the third author are both corresponding authors.}

%%%%%%%%%%%%%%%%%%%%%%%%%%%%%%%%%%%%%%%%%%%%%%%%%%%%%%
\begin{abstract}
We give characterizations of (quasi-)plurisubharmonic functions
in terms of  $L^p$-estimates of $\bar\partial$ and $L^p$-extensions of holomorphic functions.
\end{abstract}

%%%%%%%%%%%%%%%%%%%%%%%%%%%%%%%%%%%%%%%%%%%%%%%%%%%%%%

\maketitle

%%%%%%%%%%%%%%%%%%%%%%%%%%%%%%%%%%%%%%%%%%%%%%%%%%%%%%
\section{Introduction}
In the fundamental work \cite{Hor65},
H\"{o}rmander established a systematic $L^2$-theory for $\bar\partial$ operator.
In \cite{OT1}, Ohsawa and Takegoshi proved an extension theorem for $L^2$ holomorphic functions,
now known as Ohsawa-Takegoshi extension theorem.
H\"{o}rmander's $L^2$-theory for $\bar\partial$ and Ohsawa-Takegoshi extension theorem
are of fundamental importance in several complex variables and
have terrific applications in algebraic geometry.

%which also have many important applications
%in the areas mentioned above.
%While in the very beginning H\"{o}rmander has proven an optimal version
%of the $L^2$-estimate of $\bar\partial$-equation,
%the optimal estimate for the Ohsawa-Takegoshi extension theorem was just proved
%recently by Blocki \cite{Bl} and Guan-Zuou \cite{GZh12,GZh15d}.

In both H\"{o}rmander's $L^2$-theory for $\bar\partial$ and Ohsawa-Takegoshi extension theorem,
plurisubharmonic functions are used as weights and play a key role.
It is natural to ask if plurisubharmonic functions are the only choice for weights in both theories?
The aim of the present paper is to answer this question affirmatively and give characterizations of (quasi-)plurisubharmonic functions
in terms of $L^p$-estimates of $\bar\partial$ and $L^p$-extensions of holomorphic functions,
which can be roughly understood as converses of H\"{o}rmander's $L^2$ estimate
of $\bar\partial$ and Ohsawa-Takegoshi extension theorem.
To state the results, we first introduce some notions.

\begin{defn}\label{def:L^p peroperty}
Let $\phi:D\ra [-\infty, +\infty)$ be an upper semi-continuous function on a domain $D$ in $\mc^n$.
We say that:
\begin{itemize}
\item[(1)]
$\phi$ satisfies \emph{the optimal $L^p$-estimate property} if for any $\bar\partial$-closed smooth $(0,1)$-form $f$ on $D$ with compact support
and any smooth strictly plurisubharmonic function $\psi$ on $D$, the equation $\bar\partial u=f$ can be solved on $D$
with the estimate
$$\int_D|u|^pe^{-\phi-\psi}\leq \int_D|f|_{i\partial\bar\partial\psi}^pe^{-\phi-\psi}$$
provided that the right hand side is finite.
\item[(2)]
$\phi$ satisfies \emph{the multiple coarse $L^p$-estimate property} if for any $m\geq 1$,
any $\bar\partial$-closed smooth $(0,1)$-form $f$ on $D$ with compact support,
and any smooth strictly plurisubharmonic function $\psi$ on $D$,
the equation $\bar\partial u=f$ can be solved on $D$ with the estimate
$$\int_D|u|^pe^{-m\phi-\psi}\leq C_m\int_D|f|_{i\partial\bar\partial\psi}^pe^{-m\phi-\psi},$$
provided that the R.H.S is finite, where $C_m$ are constants such that
$\lim\limits_{m\ra\infty}\log C_m/m=0$.
\item[(3)]
$\phi$ satisfies \emph{the optimal $L^p$-extension property} if for any $z\in D$ with $\phi(z)\neq -\infty$,
and any holomorphic cylinder $P$ with $z+P\subset D$,
there is a holomorphic function $f$ on $z+P$ such that $f(z)=1$ and
$$\frac{1}{\mu(P)}\int_{z+P}|f|^pe^{-\phi}\leq e^{-\phi(z)},$$
where $\mu(P)$ is the volume of $P$ with respect to the Lebesgue measure.
(Here by a holomorphic cylinder we mean a domain of the form $A(P_{r,s})$
for some $A\in U(n)$ and $r,s>0$,
with $P_{r,s}=\{(z_1,z_2,\cdots,z_n):|z_1|^2<r^2,|z_2|^2+\cdots+|z_n|^2<s^2\}$.)
\item[(4)]
$\phi$ satisfies \emph{the multiple coarse $L^p$-extension property} if for any $m\geq 1$,
any $z\in D$ with $\phi(z)\neq -\infty$,
there is a holomorphic function $f$ on $D$ such that $f(z)=1$ and
$$\int_D|f|^pe^{-m\phi}\leq C_m e^{-m\phi(z)},$$
where $C_m$ are constants such that $\lim\limits_{m\ra\infty}\log C_m/m=0$.
\end{itemize}
\end{defn}

A related but different form of \emph{the optimal $L^p$-extension property}
for $p=2$ was introduced in \cite{HPS16},
\emph{the multiple coarse $L^p$-extension property} was introduced in \cite{DWZZ1},
and \emph{the multiple coarse $L^p$-estimate property} was introduced and named
as the twisted H\"{o}rmander condition in \cite{HI} in the case that $p=2$,
with a different form in dimension one case introduced in \cite{Bob98}.

If $\phi$ is plurisubharmonic, $p=2$, and $D$ is a bounded pseudoconvex domain,
then $\phi$ satisfies:
\begin{itemize}
\item \emph{the optimal $L^2$-estimate property} and \emph{the multiple coarse $L^2$-estimate property}
by H\"{o}rmander's $L^2$-estimate of $\bar\partial$
(c.f. \cite[Theorem 1.6.4]{Ber} for an appropriate formulation);
\item \emph{the multiple coarse $L^2$-extension property} by Ohsawa-Takegoshi \cite{OT1}; and
\item \emph{the optimal $L^2$-extension property} by Blocki \cite{Bl} and Guan-Zhou \cite{GZh12, GZh15d}.
\end{itemize}

In the present paper, we will establish the converses of the above results,
namely, we will show that $\phi$ must be plurisubharmonic if it satisfies
one of the properties in Definition \ref{def:L^p peroperty}.
The precise results are formulated as several theorems as follows.

\begin{thm}\label{thm:sharp estimate}
Let $D$ be a domain in $\mathbb{C}^n$, $\phi\in \mathcal{C}^2(D)$.
If $\phi$ satisfies \emph{the optimal $L^2$-estimate property},
then $\phi$ is plursubharmonic on $D$.
\end{thm}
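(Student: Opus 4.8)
The plan is to argue by contraposition: assuming $\phi$ fails to be plurisubharmonic at some point, I will produce an admissible pair $(\psi,f)$ for which \emph{no} solution of $\bar\partial u=f$ can satisfy the optimal $L^2$-estimate, contradicting the hypothesis. Suppose then that the complex Hessian $(\phi_{j\bar k})$ has a negative eigenvalue at some $p\in D$. Since all the data in the optimal $L^2$-estimate property (the operator $\bar\partial$, the Lebesgue measure, and the pointwise norm $|\cdot|_{i\partial\bar\partial\psi}$) are invariant under unitary changes of coordinates, and since the property is unchanged if we add to $\phi$ the real part of a holomorphic polynomial (conjugate $f$ and $u$ by the unimodular holomorphic factor $e^{\pm P}$), I may normalize so that $p=0$ and, near $0$, $\phi(z)=\sum_{j=1}^n\mu_j|z_j|^2+E(z)$ with $\mu_1=-2c<0$ and with $E\in C^2$ vanishing to second order at $0$, so that $E(z)=o(|z|^2)$ and $\nabla E(z)=o(|z|)$ as $z\to 0$.

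Next I choose the weight and the test data. Take $\psi(z)=\sum_{j=1}^n s_j|z_j|^2$, a smooth strictly plurisubharmonic function on $D$, with $s_j$ chosen so that $\lambda_j:=s_j+\mu_j>0$ for all $j$; write $s:=s_1$, so that $\lambda_1=s-2c$, and regard $s$ (hence every $\lambda_j$) as a large parameter tending to $\infty$. Fix a cutoff $\chi\in C_c^\infty(B(0,r))$ with $\chi\equiv 1$ on $B(0,r/2)$, where $B(0,r)\subset\subset D$ and $r$ is small, to be fixed below. Put $\beta:=s\,\bar z_1\,\chi$, let $f:=\bar\partial\beta$ — automatically $\bar\partial$-closed and compactly supported — and let $\alpha_0:=\chi\,d\bar z_1\in\mathrm{Dom}(\bar\partial^*_\Phi)$, where $\Phi:=\phi+\psi$. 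The hypothesis furnishes a solution $u$ of $\bar\partial u=f$ with $\int_D|u|^2e^{-\Phi}\le\int_D|f|^2_{i\partial\bar\partial\psi}e^{-\Phi}$; on the other hand, integrating by parts (this is where $p=2$ enters) gives $(f,\alpha_0)_\Phi=(u,\bar\partial^*_\Phi\alpha_0)_\Phi$ and hence the lower bound $\int_D|u|^2e^{-\Phi}\ge|(f,\alpha_0)_\Phi|^2/\|\bar\partial^*_\Phi\alpha_0\|^2_\Phi$. Combining the two yields the inequality $|(f,\alpha_0)_\Phi|^2\le\|\bar\partial^*_\Phi\alpha_0\|^2_\Phi\cdot\int_D|f|^2_{i\partial\bar\partial\psi}e^{-\Phi}$, which I will show is violated.

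To see the violation, I compute the three quantities to leading order by replacing $\Phi$ with the Gaussian model $\Phi_0=\sum_j\lambda_j|z_j|^2$ and $\chi$ with $1$. Writing $V=\int e^{-\Phi_0}$ and using $\int|z_1|^2e^{-\Phi_0}=V/\lambda_1$, one finds, on the region $\{\chi\equiv 1\}$, that $\bar\partial^*_\Phi\alpha_0\approx\lambda_1\bar z_1$, that $(f,\alpha_0)_\Phi\approx sV$, and that $|f|^2_{i\partial\bar\partial\psi}\approx s$; hence the left-hand side is $\approx s^2V^2$ while the right-hand side is $\approx(\lambda_1V)(sV)=s\lambda_1V^2$, so their ratio tends to $s/\lambda_1=1+2c/\lambda_1>1$. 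Thus in the model the desired inequality fails, and the gap is the margin $2c/\lambda_1$.

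The main work — and the only real obstacle — is to show that this strict inequality survives the two approximations. The cutoff corrections (terms carrying a derivative of $\chi$) and the error in replacing $\int_{\mathbb{C}^n}$ by $\int_{B(0,r)}$ are supported where $|z|\ge r/2$, hence are $O(e^{-\lambda_1 r^2/4})$ times polynomial factors; since $\lambda_1\to\infty$ these are exponentially small compared with every main term. The Taylor remainder, entering both through $e^{-\Phi}=e^{-\Phi_0}e^{-E}$ and through $\nabla E$ in $\bar\partial^*_\Phi\alpha_0$, contributes to each integral a \emph{relative} error of size $O\big(\eta\sum_j\lambda_j^{-1}\big)=O(\eta/\lambda_1)$, where $\eta$ bounds $|E(z)|/|z|^2$ and $|\nabla E(z)|/|z|$ on $B(0,r)$; this is the delicate point, because the margin $2c/\lambda_1$ is of the \emph{same} order $1/\lambda_1$. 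The resolution is that $\eta\to 0$ as $r\to 0$ while the comparison between relative error and margin is independent of $\lambda_1$: it suffices first to fix $r$ small enough that $\eta$ is a small fraction of $c$ (all constants here depending only on $n$), and then to let $s\to\infty$. With this order of quantifiers the ratio of the two sides stays above $1$, contradicting the displayed inequality and completing the proof that $\phi$ must be plurisubharmonic.
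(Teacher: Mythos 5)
Your proposal is correct, but its technical core is genuinely different from the paper's. Both arguments share the same skeleton: pair the solution $u$ against a test form $\alpha_0$ supported near a point where $i\partial\bar\partial\phi$ has a negative eigenvalue, deduce $|(f,\alpha_0)_{\phi+\psi}|^2\leq \|\bar\partial^*_{\phi+\psi}\alpha_0\|^2_{\phi+\psi}\int_D|f|^2_{i\partial\bar\partial\psi}e^{-\phi-\psi}$, and violate this by sending a large parameter in $\psi$ to infinity. The paper, however, never Taylor-expands or normalizes $\phi$: it invokes H\"ormander's identity (Lemma~\ref{lem1}) to rewrite $\|\bar\partial^*_{\phi+\psi}\alpha\|^2_{\phi+\psi}$ \emph{exactly} as the Hessian term $\int_D\sum(\phi+\psi)_{j\bar k}\alpha_j\bar\alpha_k e^{-\phi-\psi}$ plus $\int_D\sum|\partial\alpha_j/\partial\bar z_k|^2e^{-\phi-\psi}$ minus $\|\bar\partial\alpha\|^2_{\phi+\psi}$, and after contracting $f$ with $(i\partial\bar\partial\psi)^{-1}$ arrives at the clean inequality \eqref{eq2}, in which $i\partial\bar\partial\phi$ appears linearly and in isolation; taking $\psi_s=s(|z|^2-r^2/4)$ then makes the negative Hessian contribution dominate at order $1/s^2$ while every other term is $o(1/s^2)$, so no competition between error and margin ever arises, and the same computation proves the stronger $\omega$-version, Theorem~\ref{thm:omega-sharp estimate}. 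You instead bypass the Bochner-type identity entirely: after reducing, by unitary invariance and invariance under pluriharmonic shifts, to $\phi=\sum_j\mu_j|z_j|^2+E$ with $E=o(|z|^2)$, you compute $\bar\partial^*_\Phi\alpha_0=\partial\Phi/\partial z_1$ explicitly against a Gaussian model, and you must then beat a margin $2c/\lambda_1$ with Taylor errors $O(\eta/\lambda_1)$ of the very same order; your quantifier ordering (fix $r$ so that $\eta\ll c$ with constants depending only on $n$, then let $s\to\infty$) is exactly the right resolution of this, and it is the one genuinely delicate point of your route. What your approach buys is self-containedness — no appeal to H\"ormander's identity — at the cost of the error bookkeeping, of being tied to the case $\omega=0$, and of having to verify the two invariances of the optimal $L^2$-estimate property; these do hold, but note one cosmetic slip: the conjugating factor is $e^{P/2}$, which is \emph{not} unimodular — rather its modulus $e^{\mathrm{Re}\,P/2}$ is exactly absorbed by replacing the weight $\phi$ with $\phi-\mathrm{Re}\,P$.
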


In fact, Theorem \ref{thm:sharp estimate} can be strengthened to characterize quasi-plurisubharmonicity as follows.

\begin{thm}\label{thm:omega-sharp estimate}
Let $D$ be a domain in $\mathbb{C}^n$, $\phi\in \mathcal{C}^2(D)$
and $\omega$ be a continuous real $(1,1)$-form on $D$.
If for any $\bar\partial$-closed smooth $(0,1)$-form $f$ on $D$ with compact support
and any smooth strictly plurisubharmonic function $\psi$ on $D$ with $i\partial\bar\partial\psi+\omega>0$ on $\text{supp} f$,
the equation $\bar\partial u=f$ can be solved on $D$ with the estimate
$$\int_D|u|^2e^{-\phi-\psi}\leq \int_D|f|_{i\partial\bar\partial\psi+\omega}^2e^{-\phi-\psi}$$
provided that the right hand side is finite.
Then $i\partial\bar\partial\phi\geq\omega$ on $D$.
\end{thm}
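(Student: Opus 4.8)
The plan is to dualize the hypothesis, cancel the auxiliary weight $\psi$ through the Bochner--Kodaira--Nakano identity, and then violate the resulting inequality at any point where $i\partial\bar\partial\phi\geq\omega$ fails. Fixing a smooth strictly plurisubharmonic $\psi$ and writing $\Phi=\phi+\psi$, $\Theta=i\partial\bar\partial\psi+\omega$, I would first invoke H\"ormander's functional-analytic duality: the solvability of $\bar\partial u=f$ with the stated sharp bound is equivalent to the a priori inequality
\[
\Big|\int_D\langle f,g\rangle\,e^{-\Phi}\Big|^2\leq\Big(\int_D|f|^2_{\Theta}\,e^{-\Phi}\Big)\,\|\bar\partial^*_\Phi g\|^2
\]
holding for every compactly supported smooth $(0,1)$-form $g$ with $\bar\partial g=0$. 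For a fixed such $g$, the pointwise optimizer in the metric $\Theta$ is $f=\Theta g$ (that is, $f_j=\sum_k\Theta_{j\bar k}g_k$), for which $|f|^2_\Theta=\langle\Theta g,g\rangle=\langle f,g\rangle$ pointwise; substituting it collapses the inequality to $\int_D\langle\Theta g,g\rangle e^{-\Phi}\leq\|\bar\partial^*_\Phi g\|^2$.

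Next I would expand the right-hand side by the Bochner--Kodaira--Nakano identity for the weight $\Phi$; since $\bar\partial g=0$,
\[
\|\bar\partial^*_\Phi g\|^2=\sum_{j,k}\int_D\Big|\frac{\partial g_j}{\partial\bar z_k}\Big|^2 e^{-\Phi}+\int_D\sum_{j,k}\frac{\partial^2\Phi}{\partial z_j\partial\bar z_k}\,g_j\bar g_k\,e^{-\Phi}.
\]
The crucial point is that $\Theta_{j\bar k}=\psi_{j\bar k}+\omega_{j\bar k}$ while $\Phi_{j\bar k}=\phi_{j\bar k}+\psi_{j\bar k}$, so the free second derivatives of $\psi$ cancel exactly, leaving the weight-robust inequality
\[
\int_D\sum_{j,k}\big(\omega_{j\bar k}-\phi_{j\bar k}\big)g_j\bar g_k\,e^{-\Phi}\leq\sum_{j,k}\int_D\Big|\frac{\partial g_j}{\partial\bar z_k}\Big|^2 e^{-\Phi}.
\]

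Assume now that $i\partial\bar\partial\phi\geq\omega$ fails at some $z_0$, so that after a unitary change of coordinates centred at $z_0$ one has $\omega_{1\bar1}-\phi_{1\bar1}>\delta>0$ on a small ball $B(z_0,r)$. I would test the last inequality with $g=\lambda(z_1,\bar z_1)\,d\bar z_1$, which is automatically $\bar\partial$-closed; then the left integrand is $(\omega_{1\bar1}-\phi_{1\bar1})|\lambda|^2$ and the right-hand side reduces to $\int|\partial\lambda/\partial\bar z_1|^2 e^{-\Phi}$. Choosing $\lambda$ constant on $B(z_0,r)$ and cutting it off across a thin shell, the gradient term is supported only in the shell; by taking $\psi$ strictly plurisubharmonic but very steep across that shell, $e^{-\Phi}$ there becomes arbitrarily small relative to its size on $B(z_0,r)$. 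Letting the steepness tend to infinity drives the gradient term to $0$ while the curvature term stays bounded below by $\delta\int_{B(z_0,r)}|\lambda|^2 e^{-\Phi}>0$, the desired contradiction.

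The main obstacle is admissibility of the optimizer: the datum $f=\Theta g$ must itself be a smooth, compactly supported, $\bar\partial$-closed form with $\Theta>0$ on its support, none of which is automatic once $\Theta=i\partial\bar\partial\psi+\omega$ has non-constant (indeed only continuous) coefficients. I would address this by choosing $\psi$ so that, in the chosen coordinates, $i\partial\bar\partial\psi$ is constant and tuned to make the entries $\Theta_{j\bar1}$ ($j\geq2$) vanish at $z_0$, so that $f=\Theta_{1\bar1}\lambda\,d\bar z_1$ is $\bar\partial$-closed up to an error governed by the oscillation of $\omega$ near $z_0$, which is as small as we please on the shrinking support. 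Controlling this defect --- by a preliminary mollification of $\omega$ together with a negligible correction obtained from solving an auxiliary $\bar\partial$-equation, or by a limiting argument --- while simultaneously keeping $\psi$ both strictly plurisubharmonic and steep across the shell, is the delicate part; once it is in place, the cancellation above makes the contradiction immediate.
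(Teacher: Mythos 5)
Your skeleton --- Cauchy--Schwarz duality, the H\"ormander/Bochner identity to cancel the free Hessian of $\psi$, and a localized contradiction driven by a steep $\psi$ --- is the same as the paper's, and the inequality you arrive at, $\int_D\sum(\omega_{j\bar k}-\phi_{j\bar k})g_j\bar g_k e^{-\Phi}\leq\int_D\sum|\partial g_j/\partial\bar z_k|^2e^{-\Phi}$, is exactly the paper's inequality \eqref{eq2}. But your instantiation of it has a genuine gap, precisely at the point you flag as ``the delicate part'': you fix the test form $g$ and need the datum $f=\Theta g$ to be an admissible input for the hypothesis, i.e.\ smooth, compactly supported and \emph{exactly} $\bar\partial$-closed. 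None of your proposed repairs can deliver this. The hypothesis is vacuous for non-closed $f$ (then $\bar\partial u=f$ has no solution at all), so ``closed up to an error governed by the oscillation of $\omega$'' does not plug in; worse, the defect is not small. With $g=\lambda\,d\bar z_1$ and $\psi$ quadratic with Hessian $s\,\mathrm{Id}$, one computes $f_1=(s+\omega_{1\bar 1})\lambda$, $f_j=\omega_{j\bar 1}\lambda$ for $j\geq 2$, so $\bar\partial f$ contains the terms $-s\,\partial_{\bar z_j}\lambda\,d\bar z_1\wedge d\bar z_j$: the defect grows linearly in exactly the steepness you need to kill the shell contribution, so the two halves of your construction fight each other. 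You cannot escape by taking $\lambda=\lambda(z_1,\bar z_1)$ as written: then $g$, hence $f=\Theta g$, is constant in the $z_2,\dots,z_n$ directions and is not compactly supported, so the hypothesis again does not apply (and the integration by parts behind your duality inequality would require boundary conditions you have not checked). Indeed there is no nonzero smooth compactly supported $\lambda$ with $\partial\lambda/\partial\bar z_k=0$ for all $k\geq 2$, since its slices would be compactly supported holomorphic functions. Finally, for merely continuous $\omega$ the form $\Theta g$ is not even smooth, and mollifying $\omega$ both changes the metric appearing in the hypothesis and makes $\bar\partial(\Theta_\varepsilon g)$ blow up like the derivatives of $\omega_\varepsilon$.

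The repair is to invert the roles of datum and test form, which is what the paper does. Choose the datum to be exact by construction, $f=\bar\partial\nu$ with $\nu(z)=(\sum_j\xi_j\bar z_j)\chi(z)$ for a cutoff $\chi$ equal to $1$ on $B_{r/2}$, so that $f=\sum_j\xi_j\,d\bar z_j$ there; then take the test form to be $\alpha=\Theta^{-1}f$. No closedness is required of $\alpha$, because H\"ormander's identity (Lemma \ref{lem1}) holds for arbitrary $\alpha\in\mathcal D_{0,1}(D)$, and the unwanted term $-\int_D|\bar\partial\alpha|^2e^{-\Phi}$ enters with a favorable sign and is simply discarded. With $\psi_s=s(|z|^2-r^2/4)$ one gets $\alpha^s=\frac1s(\delta_{jk}+g_{j\bar k}/s)^{-1}f$, whose coefficients are $O(1/s)$ and whose $\bar z$-derivatives are $o(1/s)$ on $B_{r/2}$ (this is where the paper assumes $\omega\in\mathcal C^1$, treating continuous $\omega$ by approximation inside the proof); multiplying \eqref{eq2} by $s^2$ and letting $s\to\infty$, the steepness of $\psi_s$ kills everything outside $B_{r/2}$ exactly as you intend, while on $B_{r/2}$ the curvature term stays below $-c$ --- and there is no admissibility issue anywhere. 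Your plan becomes a proof only after this structural inversion, which is the idea your proposal is missing.
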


We prove Theorem \ref{thm:omega-sharp estimate} by connecting $\partial\bar\partial\phi$
with the optimal $L^2$-estimate property via a Bochner type identity,
and then using a localization technique to produce a contradiction if
$i\partial\bar\partial\phi\geq\omega$ is assumed to be not true.

%\begin{rem}
%It is worth to mention that Theorem \ref{thm:sharp estimate} can be used to characterize strict
%plurisubharmonicity. Namely, let $D$ be a domain in $\mathbb{C}^n$, $\phi\in \mathcal{C}^2(D)$
%and $\phi_0\in \mathcal{C}^2(D)$ be plurisubharmonic.
%If for any $\bar\partial$-closed smooth $(0,1)$-form $f$ on $D$ with compact support
%and any smooth strictly plurisubharmonic function $\psi$ on $D$, the equation $\bar\partial u=f$ can be solved on $D$
%with the estimate
%$$\int_D|u|^pe^{-\phi-\psi}\leq \int_D|f|_{i\partial\bar\partial(\psi+\phi_0)}^pe^{-\phi-\psi}$$
%provided that the right hand side is finite.
%Then $\phi-\phi_0$ is plurisubharmonic.
%\end{rem}

\begin{thm}\label{thm:coarse estimate}
Let $D$ be a domain in $\mc^n$ and $\phi:D\ra \mr$ be a continuous function.
%Assume that $Pole(\phi)$ is closed in $D$ and $\phi|_{D\backslash Pole(\phi)}$ is continuous.
If $\phi$ satisfies \emph{the multiple coarse $L^p$-estimate property} for some $p>1$,
then $\phi$ is plursubharmonic on $D$.
\end{thm}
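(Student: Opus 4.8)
The plan is to reduce plurisubharmonicity to a lower bound for a weighted Bergman kernel, and to produce that lower bound by solving $\bar\partial$. Fix a small polydisc $P$ centered at the origin. For $z\in D$ and $m\geq 1$ set
$$K_m(z)=\sup\Big\{\,|F(z)|^p\ :\ F\ \text{holomorphic on}\ z+P,\ \tfrac{1}{\mu(P)}\int_{z+P}|F|^pe^{-m\phi}\le 1\,\Big\}.$$
Since $K_m$ is an upper envelope of the plurisubharmonic functions $\log|F|^p$, the function $\tfrac1m\log K_m$ is plurisubharmonic in $z$. Moreover, for continuous $\phi$ one always has $\limsup_{m\to\infty}\tfrac1m\log K_m\le\phi$ locally uniformly: choosing a polydisc $B\ni z$, $B\subset z+P$, with $\max_B\phi\le\phi(z)+\eta$, the normalization gives $\int_B|F|^p\le e^{m(\phi(z)+\eta)}\mu(P)$, and the sub-mean value inequality for the plurisubharmonic $|F|^p$ yields $|F(z)|^p\le\tfrac{1}{\mu(B)}\int_B|F|^p$, whence $\tfrac1m\log K_m(z)\le\phi(z)+\eta+o(1)$ and $\eta\downarrow 0$. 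Thus it suffices to prove the matching lower bound $\liminf_{m\to\infty}\tfrac1m\log K_m(z)\ge\phi(z)$ for every $z$; then $\tfrac1m\log K_m\to\phi$ locally uniformly, and $\phi$, a locally uniform limit of plurisubharmonic functions, is plurisubharmonic.

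The lower bound amounts to exhibiting, for each $z$ and each $m$, a holomorphic $F$ on $z+P$ with $|F(z)|\ge\tfrac12$ and $\tfrac{1}{\mu(P)}\int_{z+P}|F|^pe^{-m\phi}\le e^{-m\phi(z)+o(m)}$, i.e. a local form of the \emph{multiple coarse $L^p$-extension property}, which I would derive from the hypothesis by H\"ormander's correction scheme. Choose a cutoff $\chi$ equal to $1$ on a small ball around $z$ and supported in $z+P$, so that $v=\chi$ is an almost-holomorphic peak with $v(z)=1$ and $f:=\bar\partial v$ is a $\bar\partial$-closed smooth $(0,1)$-form supported in a thin shell around $z$. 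Applying the \emph{multiple coarse $L^p$-estimate property} of Definition \ref{def:L^p peroperty}(2) with weight $m\phi$ and an auxiliary smooth strictly plurisubharmonic $\psi$, I solve $\bar\partial u=f$ with
$$\int_D|u|^pe^{-m\phi-\psi}\le C_m\int_D|f|^p_{i\partial\bar\partial\psi}\,e^{-m\phi-\psi},\qquad \tfrac1m\log C_m\to 0.$$
Then $F=v-u$ is holomorphic on $D$, and since $f$ vanishes near $z$ the solution $u$ is holomorphic there, so the sub-mean value inequality controls both $u(z)$ and $\tfrac{1}{\mu(P)}\int_{z+P}|u|^pe^{-m\phi}$ by the right-hand side above (after comparing $e^{-\psi}$ on $z+P$).

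The heart of the argument is the choice of $\psi$, and this is the step I expect to be the main obstacle. The normalization $F(z)\approx 1$ requires $u(z)$ to be small, and the target norm bound requires $\int_D|f|^p_{i\partial\bar\partial\psi}e^{-m\phi-\psi}\le e^{-m\phi(z)+o(m)}$; both are the effect of the logarithmic pole $\sim 2n\log|w-z|$ used in the Ohsawa--Takegoshi extension theorem. Since Definition \ref{def:L^p peroperty}(2) admits only smooth strictly plurisubharmonic weights, I would replace the pole by a family of smooth strictly plurisubharmonic wells $\psi=\psi_m$ with a deep, sharply curved minimum at $z$, for instance $\psi_m=N_m|w-z|^2$ with $N_m$ growing linearly in $m$ and the shell radius $\delta=\delta_m$ of $\chi$ coupled to $m$. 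The strong curvature makes $|f|^p_{i\partial\bar\partial\psi_m}=N_m^{-p/2}|f|^p$ small and concentrates the penalty at $z$, forcing the minimal-norm solution $u$ to be small at $z$, while the factor $e^{-\psi_m}$ supplies the extra decay $e^{-N_m\delta_m^2}$ on the shell needed to push the correction's exponential rate up to $-\phi(z)$.

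The delicate, quantitative points are: (i) keeping $|u(z)|\le\tfrac12$ uniformly in $m$, so that $F/F(z)$ is an admissible competitor with $|F(z)|$ bounded below; and (ii) verifying that the shell values of $\phi$ on $\mathrm{supp}\,f$, combined with the suppression $e^{-N_m\delta_m^2}$ and the factor $N_m^{-p/2}$, leave the correction's rate at $\ge-\phi(z)$ rather than at the larger extremum $-\inf_{\text{shell}}\phi$ that a naive Laplace estimate would give — achieved by letting $\delta_m\to 0$ slowly, so that $\inf_{\text{shell}}\phi\to\phi(z)$ while $N_m\delta_m^2$ stays controlled. Here the forgiving constants are essential: all suboptimal factors produced by the cutoff ($|\bar\partial\chi|\sim\delta_m^{-1}$), by the well $\psi_m$, and by the mean-value passages enter only through quantities whose $\tfrac1m\log$ tends to $0$, so they are absorbed into the $C_m$ of Definition \ref{def:L^p peroperty}(2) and do not disturb the target rate. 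Once $\tfrac1m\log K_m(z)\ge\phi(z)-o(1)$ is established at every $z$, the sandwich with the free upper bound of the first paragraph yields $\tfrac1m\log K_m\to\phi$, and hence the plurisubharmonicity of $\phi$.
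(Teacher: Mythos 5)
Your overall strategy (peak function plus a $\bar\partial$-correction solved via the hypothesis) is the same as the paper's, but the two points where your write-up is vague are exactly where it breaks, and the first one is fatal. The smooth quadratic wells $\psi_m=N_m|w-z|^2$ cannot do the job of the Ohsawa--Takegoshi pole. Quantify your own requirements: since the hypothesis only gives a weighted bound, forcing $|u(z)|\le\tfrac12$ by the sub-mean value inequality on a ball inside $\{\chi=1\}$ requires the relative depth of the well across the shell, namely $N_m\delta_m^2$ (up to constants), to dominate $m\,\omega(\delta_m)+\log C_m+p\log(1/\delta_m)$, where $\omega$ is the modulus of continuity of $\phi$; the term $m\,\omega(\delta_m)$ is unavoidable because $e^{-m\phi}$ on the shell and near $z$ can only be compared to $e^{-m\phi(z)}$ up to $e^{m\omega(\delta_m)}$. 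On the other hand, converting the weighted bound into the unweighted bound $\frac{1}{\mu(P)}\int_{z+P}|F|^pe^{-m\phi}\le e^{-m\phi(z)+o(m)}$ on the \emph{fixed} cylinder $P$ of radius $R$ costs the factor $\sup_{z+P}e^{\psi_m}=e^{N_mR^2}$, so you need $N_mR^2=o(m)$. Combining, $m\,\omega(\delta_m)\lesssim N_m\delta_m^2=(\delta_m/R)^2N_mR^2=o(m)(\delta_m/R)^2$, i.e.\ $\omega(\delta_m)=o(\delta_m^2)$. But every non-constant continuous $\phi$ satisfies $\omega(t)\ge ct$ for small $t$ (chain points along a path joining two points with different values), so no coupling of $N_m$ and $\delta_m$ to $m$ can work; ``letting $\delta_m\to0$ slowly'' does not resolve the tension you flagged in (i)--(ii). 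The paper's resolution is structurally different: it takes $\psi_\delta=|z|^2+n\log(|z-w|^2+\delta^2)$, which is smooth, strictly plurisubharmonic, and \emph{bounded above on $D$ uniformly in $\delta$} (so un-weighting costs only a constant depending on $\mathrm{diam}\,D$), and then lets $\delta\to0$: the limit weight has a $2n\log|z-w|$ pole, and $u(w)=0$ is forced by non-integrability of $e^{-\psi_0}$ alone, with no constants to beat. This vanishing is genuinely a limit phenomenon -- for any single fixed smooth weight the hypothesis does not pin down the value $u(w)$ -- and the passage to the limit is done by extracting weak limits of the solutions $u_{\epsilon,\delta}$ in weighted $L^p$, which is precisely where the hypothesis $p>1$ (reflexivity of $L^p$) enters. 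Your argument never uses $p>1$, which is a symptom of the missing step.

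The second gap is your claim that $\tfrac1m\log K_m$ is plurisubharmonic because $K_m$ is ``an upper envelope of the plurisubharmonic functions $\log|F|^p$.'' It is not an envelope of a fixed family: each competitor $F$ lives on $z+P$ and is normalized by an integral over $z+P$, so an $F$ admissible at $z_0$ is in general neither defined on nor admissible for nearby $z$. Plurisubharmonic variation of such moving weighted Bergman-type kernels is essentially equivalent (by Berndtsson's theory) to plurisubharmonicity of the weight $m\phi$ itself, i.e.\ assuming it is circular. The paper avoids this issue entirely: it proves the \emph{global} multiple coarse $L^p$-extension property (the solution $u_\epsilon$ is controlled on all of $D$ because $\psi_0$ is bounded above on $D$), and then concludes by Theorem \ref{thm:coarse estension}, whose proof uses only Jensen's inequality, the sub-mean value inequality for $\log|f_m|$, and Lemma \ref{psh} (the exact mean-value inequality over fixed cylinders characterizes plurisubharmonicity). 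Note also that retreating to cylinders $P_m$ shrinking with $m$ cannot rescue your sandwich: a mean-value inequality with $o(1)$ error over shrinking cylinders is satisfied by \emph{every} continuous function, hence carries no information.
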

In connection with Theorem \ref{thm:coarse estimate}, we remark that
Berndtsson has proved the following result \cite[Proposition 2.2]{Ber}:
let $D$ be a domain in $\mc$ and $\phi$ be a continuous function on $D$,
such that for any $m\geq 1$ and any $f\in C^\infty_c(D)$
we can solve the equation $\bar\partial u=fd\bar z$ with the estimate
$$\int_D|u|^2e^{-m\phi}\leq C\int_D|f|^2e^{-m\phi}$$
with $C$ a uniform constant independent of $m$,
then $\phi$ is subharmonic.
The method of Berndtsson depends on the fact that the $(0,1)$-Dolbeault cohomology
of $\mc$ with compact support does not vanish, and seems difficult to generalize to higher dimensions.
The case of Theorem \ref{thm:coarse estimate} that $p=2$ and $\phi$ is locally H\"{o}lder continuous for general dimensions was proved in \cite{HI},
by showing that the multiple coarse $L^2$-estimate property implies
the multiple coarse $L^2$-extension property, and then applying Theorem \ref{thm:coarse estension}
in the following.
The question about if the local H\"{o}lder continuity can be removed or not is proposed in \cite{HI}.
Theorem \ref{thm:coarse estimate} answers this question affirmatively and is proved by
modifying the idea in \cite{HI}.

\begin{thm}\label{thm:sharp extension}
Let $\phi:D\ra [-\infty, +\infty)$ be an upper semi-continuous function on a domain $D$ in $\mc^n$.
If $\phi$ satisfies \emph{the optimal $L^p$-extension property} for some $p>0$,
then $\phi$ is plurisubharmonic on $D$.
\end{thm}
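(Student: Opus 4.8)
The plan is to squeeze a sub-mean value inequality for $\phi$ out of the optimal $L^p$-extension property and then let the cylinders degenerate onto complex discs. First I would fix $z\in D$ with $\phi(z)\neq-\infty$ and a holomorphic cylinder $P=A(P_{r,s})$ with $z+P\subset D$, writing $a=Ae_1$ for the associated unit vector (as $A$ ranges over $U(n)$, $a$ ranges over all unit vectors). The extension property furnishes a holomorphic $f$ on $z+P$ with $f(z)=1$ and $\int_{z+P}|f|^pe^{-\phi}\,d\mu\le\mu(P)e^{-\phi(z)}$. The key observation is that $\log|f|$ is plurisubharmonic on $z+P$, so by the sub-mean value property over the centered domain $P$ (apply the one-variable inequality in each $z_1$-slice and then the $(n-1)$-variable inequality over the ball factor) one gets $\frac{1}{\mu(P)}\int_{z+P}\log|f|\,d\mu\ge\log|f(z)|=0$.

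Next I would linearize the $L^p$-bound in $\phi$. Applying Jensen's inequality to the convex function $-\log$ and the probability measure $d\mu/\mu(P)$ on $z+P$ gives
\[
\phi(z)\le-\log\Big(\tfrac{1}{\mu(P)}\int_{z+P}|f|^pe^{-\phi}\,d\mu\Big)\le\frac{1}{\mu(P)}\int_{z+P}\big(\phi-p\log|f|\big)\,d\mu ,
\]
the first inequality because $-\log$ is decreasing and the estimate bounds the argument by $e^{-\phi(z)}$. Combining this with $\int_{z+P}\log|f|\,d\mu\ge0$ yields the sub-mean value inequality over the cylinder,
\[
\phi(z)\le\frac{1}{\mu(P)}\int_{z+P}\phi\,d\mu .
\]
Since $\phi$ is upper semi-continuous and locally bounded above, all integrals here are well defined in $[-\infty,+\infty)$, and nothing in this computation uses $p\ge 1$, so it is valid for every $p>0$.

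Finally I would degenerate the cylinder. Writing points of $z+P$ as $z+A(\lambda,w)$ with $|\lambda|<r$, $|w|<s$, Fubini gives $\frac{1}{\mu(P)}\int_{z+P}\phi\,d\mu=\frac{1}{V(s)}\int_{|w|<s}G(w)\,dV(w)$, where $G(w)=\frac{1}{\pi r^2}\int_{|\lambda|<r}\phi(z+A(\lambda,w))\,dA(\lambda)$ and $V(s)$ is the volume of $\{|w|<s\}\subset\mathbb{C}^{n-1}$. The reverse Fatou lemma (using that $\phi$ is bounded above) shows $G$ is upper semi-continuous, so $\limsup_{s\to0}\frac{1}{V(s)}\int_{|w|<s}G\,dV\le G(0)$, and letting $s\to0$ produces the solid disc inequality $\phi(z)\le\frac{1}{\pi r^2}\int_{|\lambda|<r}\phi(z+\lambda a)\,dA(\lambda)$. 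Running this over all base points of $D$, all unit directions $a$, and all small radii $r$, each slice $\lambda\mapsto\phi(z+\lambda a)$ satisfies the area sub-mean value inequality for small radii, hence is subharmonic; by definition $\phi$ is plurisubharmonic. At points with $\phi(z)=-\infty$ every such inequality is trivial.

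The hard part will be the passage $s\to0$: one must verify that the disc-average $G$ is upper semi-continuous and that solid ball averages of an upper semi-continuous function do not exceed its central value in the limit. Granting the standard fact that the area sub-mean value inequality characterizes subharmonicity for upper semi-continuous functions, the remaining content is the Jensen computation, whose only genuine input is the plurisubharmonicity (indeed pluriharmonicity off its zero set) of $\log|f|$.
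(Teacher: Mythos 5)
Your proposal is correct and follows essentially the same route as the paper: the Jensen-inequality computation combined with the sub-mean value property of $\log|f|$ yields the cylinder mean-value inequality $\phi(z)\leq\frac{1}{\mu(P)}\int_{z+P}\phi$, exactly as in the paper's proof. Your final degeneration step ($s\to 0$, via reverse Fatou and upper semi-continuity of the disc-average) is precisely the content of the paper's Lemma \ref{psh}, which it proves by the same argument, only organized as a scaling of the fixed cylinder $P_{r,1}$ rather than through the slice-average function $G$.
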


Theorem \ref{thm:sharp extension} for $n=1$ is essentially contained in \cite{GZh15d}
where it is shown that the optimal estimate of the Ohsawa-Takegoshi extension theorem
implies Berndtsson's plurisubharmonic variation of relative Bergman kernels \cite{Bob06, Bob09}.
The general case is proved by modifying Guan-Zhou's method.

\begin{thm}\label{thm:coarse estension}
Let $\phi:D\ra [-\infty, +\infty)$ be an upper semi-continuous function on a domain $D$ in $\mc^n$.
If $\phi$ satisfies \emph{the multiple coarse $L^p$-extension property} for some $p>0$,
then $\phi$ is plursubharmonic on $D$.
\end{thm}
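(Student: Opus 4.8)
The plan is to realize $\phi$ as the limit of the $\frac1m$-th logarithms of a sequence of Bergman-type extremal functions, which are plurisubharmonic after upper regularization, and then to transfer the sub-mean value inequality from these extremal functions to $\phi$ itself.

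Concretely, for each integer $m\geq 1$ and each $z\in D$ I would set
$$K_m(z) := \sup\Big\{|f(z)|^p : f\in\mathcal{O}(D),\ \int_D|f|^pe^{-m\phi}\leq 1\Big\}.$$
Since $\log K_m(z)=p\sup_f\log|f(z)|$ is a supremum of the plurisubharmonic functions $\log|f|$, once the family is seen to be locally bounded above the upper semicontinuous regularization $\Phi_m:=(\tfrac1m\log K_m)^*$ will be plurisubharmonic on $D$. Two estimates drive the argument. First, normalizing the function provided by \emph{the multiple coarse $L^p$-extension property} at a point $z$ with $\phi(z)\neq-\infty$ produces an admissible competitor for $K_m(z)$ and hence the lower bound $\tfrac1m\log K_m(z)\geq \phi(z)-\tfrac{\log C_m}{m}$. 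Second, for any ball $B(z,r)\subset D$ the sub-mean value inequality applied to the plurisubharmonic function $|f|^p$, together with the crude bound $e^{-m\phi}\geq e^{-m\sup_{B(z,r)}\phi}$ on $B(z,r)$, yields $|f(z)|^p\leq \mu(B(z,r))^{-1}e^{m\sup_{B(z,r)}\phi}$, whence $\tfrac1m\log K_m(z)\leq \sup_{B(z,r)}\phi-\tfrac1m\log\mu(B(z,r))$. This upper bound in particular shows that the family is locally bounded above, so $\Phi_m$ is genuinely plurisubharmonic, and it also controls $\Phi_m$ from above by $\sup_{\overline{B(\cdot,r)}}\phi$ after regularization.

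With these two bounds in hand I would prove the sub-mean value inequality for $\phi$ directly. Fix $z_0$ with $\phi(z_0)\neq-\infty$ (the inequality being trivial otherwise), a direction $v$, and a small radius $\rho$. Writing the sub-mean value inequality for the plurisubharmonic $\Phi_m$ over the circle $\theta\mapsto z_0+\rho e^{i\theta}v$, bounding $\Phi_m(z_0)$ below by the first estimate and $\Phi_m$ on the circle above by the second, and using that $\mu(B(\cdot,r))$ depends only on $r$, I obtain
$$\phi(z_0)-\frac{\log C_m}{m}\leq \frac{1}{2\pi}\int_0^{2\pi}\sup_{\overline{B(z_0+\rho e^{i\theta}v,\,r)}}\phi\,d\theta-\frac{\log(c_nr^{2n})}{m}.$$
Letting $m\to\infty$ (so that both error terms vanish, using $\log C_m/m\to0$) and then $r\to0$ (so that $\sup_{\overline{B(w,r)}}\phi\downarrow\phi(w)$ by upper semicontinuity, the passage to the limit being justified by monotone convergence since $\phi$ is locally bounded above) produces the desired inequality $\phi(z_0)\leq \tfrac{1}{2\pi}\int_0^{2\pi}\phi(z_0+\rho e^{i\theta}v)\,d\theta$.

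The main obstacle, I expect, is not any single estimate but the care needed to keep everything valid for a merely upper semicontinuous (rather than continuous) $\phi$: one must work throughout with the upper regularization $\Phi_m$ — which is what actually enjoys the sub-mean value property — rather than with $\tfrac1m\log K_m$ itself, verify the local boundedness above that makes $\Phi_m$ plurisubharmonic, and control the regularization on the outer circle by $\sup_{\overline{B(\cdot,r)}}\phi$ so that the final monotone-convergence step in $r$ recovers $\phi$ exactly. Once the sub-mean value inequality is established along every complex line and about every center, upper semicontinuity of $\phi$ gives subharmonicity of each restriction to a complex line and hence plurisubharmonicity of $\phi$ on $D$.
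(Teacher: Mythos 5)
Your proposal is correct, but it follows a genuinely different route from the paper's proof. You regularize $\phi$ via the weighted extremal (Bergman-type) functions $K_m$, invoke the Brelot--Cartan theorem to make $\Phi_m=(\tfrac1m\log K_m)^*$ plurisubharmonic, and then sandwich $\phi$ between $\Phi_m-\tfrac{\log C_m}{m}$ and $\sup_{\overline{B(\cdot,r)}}\phi-\tfrac1m\log\mu(B_r)$ to transfer the circle sub-mean value inequality from $\Phi_m$ to $\phi$; this is essentially the Demailly-regularization strategy of the original proof in \cite{DWZZ1}, which the paper explicitly sets aside in favor of a new argument. The paper instead applies Guan--Zhou's method: for each $m$ it restricts the extension estimate for $f_m$ to a holomorphic cylinder $z_0+P$, applies Jensen's inequality to $-\tfrac1m\log$ of the average of $|f_m|^pe^{-m\phi}$, drops the term $-\tfrac{1}{m\mu(P)}\int p\log|f_m|$ using the mean value inequality for the plurisubharmonic function $\log|f_m|$ together with $f_m(z_0)=1$, and lets $m\to\infty$ to obtain $\phi(z_0)\le\tfrac{1}{\mu(P)}\int_{z_0+P}\phi$; plurisubharmonicity then follows from the paper's Lemma 3.1, whose degenerate (flat) cylinders recover subharmonicity along complex lines. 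The trade-off is clear: your approach needs more potential-theoretic machinery (the regularization theorem, upper-semicontinuous control of $\Phi_m$ by $\sup_{\overline{B(\cdot,r)}}\phi$, and the monotone-convergence step in $r$), but it bypasses the cylinder lemma entirely since circle mean-value inequalities come for free from plurisubharmonicity of $\Phi_m$, and it yields a Demailly-type approximation of $\phi$ of independent interest; the paper's argument is more elementary and self-contained --- only Jensen's inequality and the mean value property of $\log|f_m|$ --- but it must work with solid cylinders (balls would only give subharmonicity in $2n$ real variables, not plurisubharmonicity), which is exactly why Lemma 3.1 is needed. Both proofs correctly exploit the same key limiting mechanism $\log C_m/m\to 0$, and your handling of the delicate points (local boundedness above, regularization on the outer circle, and the final $r\to 0$ limit by monotone convergence) is sound.
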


Theorem \ref{thm:coarse estension} was originally proved in \cite{DWZZ1},
where the method is motivated by Demailly's idea on the regularization of plurisubharmonic functions  \cite{Dem92}.
In the present paper, we give a new proof of it based on Guan-Zhou's method mentioned above.

\begin{rem}
Note that plurisubharmonicity involves exact inequalities.
It is more or less reasonable to expect that sharp estimates for $\bar\partial$
(namely the optimal $L^p$-estimate property and the optimal $L^p$-extension property)
could imply the plurisubharmonicity of $\phi$.
However, it is difficult to expect that coarse estimates can encode plurisubharmonicity.
The point in Theorem \ref{thm:coarse estimate} and Theorem \ref{thm:coarse estension}
is that we have to consider powers of the trivial line bundle with product metrics $e^{-m\phi}$,
and then there is a procedure similar to taking $m$-th roots of $C_m$ and
finally we get the exact number $1=\lim\sqrt[m]{C_m}$ in the limit.
This is the main observation in \cite{DWZZ1}.
\end{rem}
\begin{rem}
Continuity assumption in Theorem \ref{thm:coarse estimate}
is in some sense optimal since the weight $\phi$ appears in integrations in both sides
of the involved estimates.
This condition can be weakened to semi-continuity in Theorem \ref{thm:sharp extension}
and Theorem \ref{thm:coarse estension} since the related estimates involves pointwise
evaluations of $\phi$.
Therefore it is natural to ask whether the regularity condition on $\phi$ in Theorem \ref{thm:omega-sharp estimate}
can be weakened to being continuous?
\end{rem}
The above theorems, combining with $L^2$-theory of $\bar\partial$,
imply that the four properties in Definition \ref{def:L^p peroperty}
are essentially equivalent for $\phi$ at least for the case that $p=2$.
Considering this conclusion is built on some heavy theories,
it seems interesting to find more straightforward methods
to establish the equivalence of these properties.

We now recall some background and geometric meaning of plurisubharmonic functions.

A plurisubharmonic function on an open set in $\mc^n$ is an upper semi-continuous function
with values in $[-\infty,+\infty)$ whose restriction to any complex line is subharmonic.
It turns out that the definition does not depend on the linear structure on $\mc^n$ and
is invariant under holomorphic coordinate changes,
hence plurisubharmonic functions can be defined on complex manifolds (or even complex spaces).

Plurisubharmonic functions play very important roles in several complex variables and complex geometry.
Geometrically, a basic fact is that plurisubharmonic functions are connected to positivity
of curvatures of Hermitian holomorphic vector bundles.

For the line bundle case, let $L$ be a holomorphic line bundle over a complex manifold $X$ and $h$ be a Hermitian metric on $L$.
Let $e$ be a holomorphic local frame of $L$ on some open set $U\subset X$.
Then $\|e\|^2$ is a positive function on $U$ and thus can be written as $e^{-\phi}$ for some
smooth function $\phi$ on $U$.
Then the curvature of $(L,h)$ on $U$ can be written as $i\partial\bar\partial\phi$.
Therefore $(L,h)$ is positive (resp. semi-positive) if and only if its local weights $\phi$
are strictly plurisubharmonic (resp. plurisubharmonic).
In applications, it is also very important to allow $\phi$ has singularities,
we then get the so called singular Hermitian metrics on $L$ (see \cite{Dem12}).

The above characterization of positivity of Hermitian line bundles in terms of plurisubharmonicity
can be generalized to Hermitian holomorphic vector bundles of higher rank.
Let $(E,h)$ be a Hermitian holomorphic vector bundle over $X$.
Then we can define some positivity-called the Griffiths positivity-of the curvature
of the Chern connection on $E$.
It turns out that $(E,h)$ is Griffiths negative (resp. semi-negative) if for any local nonvanishing
holomorphic section $s$ of $E$, $\log\|s\|$ is a strictly plurisubharmonic (resp. plurisubharmonic) function,
and $(E,h)$ is Griffiths positive (resp. semi-positive) if and only if its dual bundle $E^*$ with the dual metric $h^*$
is Griffiths negative (resp. semi-negative).

From the above discussion, we can get the conclusion that geometrically
plurisubharmonicity is in some sense equivalent to Griffiths positivity of Hermitian holomorphic vector bundles.

Theorem \ref{thm:sharp estimate}-\ref{thm:coarse estension}
can be viewed as local characterizations of positive Hermitian line bundles.
These results can be generalized to
Hermitian holomorphic vector bundles of higher rank.
However, for highlighting the key ideas in our method,
we will not discuss vector bundles of higher rank in the present paper.

\textbf{Acknowledgement:}
The authors are very grateful to Professor
Xiangyu Zhou, their former advisor, for valuable discussions on related topics.
The authors are partially supported by NSFC grants.

\section{Characterizations of plurisubharmonic functions in terms of $L^p$-estimates of $\bar\partial$}
\subsection{In terms of the optimal $L^2$-estimate property}
The aim of this subsection is to prove Theorem \ref{thm:omega-sharp estimate}.
For convenience, we restate it here.

\begin{thm}[= Theorem 1.2]\label{thm:sharp estimate text}
Let $D$ be a domain in $\mathbb{C}^n$, $\phi\in \mathcal{C}^2(D)$
and $\omega$ be a continuous real $(1,1)$-form on $D$.
If for any $\bar\partial$-closed smooth $(0,1)$-form $f$ on $D$ with compact support
and any smooth strictly plurisubharmonic function $\psi$ on $D$ with $i\partial\bar\partial\psi+\omega>0$ on $\text{supp} f$,
the equation $\bar\partial u=f$ can be solved on $D$ with the estimate
$$\int_D|u|^2e^{-\phi-\psi}\leq \int_D|f|_{i\partial\bar\partial\psi+\omega}^2e^{-\phi-\psi}$$
provided that the right hand side is finite.
Then $i\partial\bar\partial\phi\geq\omega$ on $D$.
\end{thm}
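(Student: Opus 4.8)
The plan is to argue by contradiction, converting the hypothesized solvability-with-estimate into an a priori dual inequality and then testing it with explicitly constructed, highly concentrated data. Throughout write $\Phi := \phi + \psi$ and $\Theta := i\partial\bar\partial\psi + \omega$, and let $B$ denote the Hermitian form with coefficients $B_{j\bar k} := \partial^2\phi/\partial z_j\partial\bar z_k - \omega_{j\bar k}$, so that pointwise the curvature coefficients satisfy $\Phi_{j\bar k} = \Theta_{j\bar k} + B_{j\bar k}$. Assuming the conclusion fails, there is a point $z_0 \in D$ and a vector $a \in \mathbb{C}^n$ with $\langle B(z_0)a,a\rangle < 0$; the goal is to manufacture an admissible pair $(\psi, f)$ and a test form violating the estimate.

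First I would record the dual form of the hypothesis. For admissible $(\psi,f)$ it provides a solution $u$ of $\bar\partial u = f$ with $\int_D |u|^2 e^{-\Phi} \le \int_D |f|^2_\Theta e^{-\Phi}$. Pairing $f=\bar\partial u$ against a smooth compactly supported $(0,1)$-form $\alpha$ in $L^2(e^{-\Phi})$ and using $\langle f,\alpha\rangle_\Phi = \langle u, \bar\partial^*_\Phi\alpha\rangle_\Phi$ gives, by Cauchy--Schwarz,
\[ |\langle f,\alpha\rangle_\Phi|^2 \le \|\bar\partial^*_\Phi\alpha\|^2_\Phi \int_D |f|^2_\Theta e^{-\Phi} \]
for every $\bar\partial$-closed compactly supported $f$ and every test form $\alpha$. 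It therefore suffices to exhibit admissible $(\psi,f)$ and a test $\alpha$ for which this inequality fails. The role of the Bochner-type identity is precisely to evaluate $\bar\partial^*_\Phi$ and the norm $\|\bar\partial^*_\Phi\alpha\|^2_\Phi$ through integration by parts against the weight, relating them to the curvature coefficients $\Phi_{j\bar k}$.

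The construction localizes at $z_0$ via a strongly concentrating weight. I would choose $\psi$ so that, at $z_0$, $\Theta(z_0) = sI$ for a large parameter $s$ (possible since $\mathrm{Hess}\,\psi$ at $z_0$ is free), and add a pluriharmonic term to $\psi$ cancelling the holomorphic part of the second-order Taylor polynomial of $\phi$ at $z_0$; this alters neither $\Theta$ nor $B$, but makes $e^{-\Phi}$ look near $z_0$ like the genuine Gaussian of exponent $\Phi(z_0) + \langle\Phi(z_0)(z-z_0),z-z_0\rangle$. Crucially $B(z_0)$ is untouched, so $\Phi(z_0) = sI + B(z_0)$. Then set $f = \bar\partial w$ with $w = \chi\cdot\overline{\langle a,\,z-z_0\rangle}$ and $\alpha = \chi\cdot c$, where $c$ is the constant covector solving $\Phi(z_0)^{\mathsf T}c = \bar a$, chosen so that $\bar\partial^*_\Phi\alpha \approx \overline{\langle a, z-z_0\rangle} = w$ on $\{\chi \equiv 1\}$, and $\chi$ is a cutoff supported near $z_0$. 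Then $f$ is $\bar\partial$-closed with compact support and, for $s$ large, $\Theta>0$ on its support, so $(\psi,f)$ is admissible.

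Finally I would compute the three integrals in the limit $s\to\infty$, where the weight behaves like a Gaussian of covariance $\Phi(z_0)^{-1}$ and the standard Bargmann--Fock moment identities apply: to leading order $\langle f,\alpha\rangle_\Phi \approx \langle\Phi(z_0)^{-1}a,a\rangle\,Z$ and $\|\bar\partial^*_\Phi\alpha\|^2_\Phi \approx \langle\Phi(z_0)^{-1}a,a\rangle\,Z$, while $\int_D|f|^2_\Theta e^{-\Phi} \approx \langle\Theta(z_0)^{-1}a,a\rangle\,Z$, where $Z$ is the Gaussian mass. Substituting, the displayed inequality reduces to $\langle\Phi(z_0)^{-1}a,a\rangle \le \langle\Theta(z_0)^{-1}a,a\rangle$; but with $\Phi(z_0) = sI + B(z_0)$ and $\Theta(z_0) = sI$ a Neumann expansion gives $\langle\Phi(z_0)^{-1}a,a\rangle - \langle\Theta(z_0)^{-1}a,a\rangle = -s^{-2}\langle B(z_0)a,a\rangle + O(s^{-3}) > 0$, a contradiction. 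The main obstacle is the error analysis in this last step: the violation margin is only of order $s^{-2}$, so one must show that all corrections --- the variation of $\phi_{j\bar k}$ and $\omega$ over the shrinking effective support (controlled by $\phi\in\mathcal{C}^2$ and continuity of $\omega$), the cutoff contributions on the annulus $\{\nabla\chi\neq0\}$ (exponentially small thanks to the concentrating weight), and the non-Gaussian remainder of $\Phi$ --- are $o(s^{-2})$. This forces a careful joint choice of the cutoff radius and of $s$, and is where the bulk of the technical work lies.
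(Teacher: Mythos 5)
Your proposal is correct in outline and shares the paper's skeleton---argue by contradiction, convert solvability into the a priori inequality $|\langle f,\alpha\rangle_{\Phi}|^2\leq \|\bar\partial^*_{\Phi}\alpha\|^2_{\Phi}\int_D|f|^2_{\Theta}e^{-\Phi}$ via Cauchy--Schwarz, and test it with $f=\bar\partial\bigl(\chi\cdot\overline{\langle a,z-z_0\rangle}\bigr)$ and a quadratic weight with large parameter $s$---but your execution of the localization is genuinely different from the paper's. The paper takes $\alpha=\Theta^{-1}f$, uses H\"ormander's identity (Lemma \ref{lem1}) to replace $\|\bar\partial^*_{\Phi}\alpha\|^2_{\Phi}+\|\bar\partial\alpha\|^2_{\Phi}$ by the curvature term plus $\int_D\sum_{j,k}|\partial\alpha_j/\partial\bar z_k|^2e^{-\Phi}$, and then localizes on a \emph{fixed} ball $B_{r/2}$ on which $\langle(i\partial\bar\partial\phi-\omega)\xi,\xi\rangle<-c$ holds uniformly, with $\psi_s=s(|z|^2-r^2/4)$: after multiplying by $s^2$, the signal is a fixed negative quantity (the weight $e^{-\psi_s}\geq 1$ on $B_{r/2}$) while all error terms vanish as $s\to\infty$ by dominated convergence, so no Gaussian asymptotics or fine bookkeeping is ever needed. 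You instead take $\alpha=\chi c$ with $c$ constant and extract the contradiction from Gaussian moment asymptotics at the concentration scale $s^{-1/2}$, where the violation margin $-s^{-2}\langle B(z_0)a,a\rangle$ must dominate all corrections; as you yourself note, this forces $o(s^{-2})$ control of every error, which is precisely the delicate analysis the paper's fixed-ball localization is designed to bypass (there only $o(1)$ relative control is needed, since the negativity is uniform over the ball rather than sampled at a point). Your deferred error analysis is in fact feasible---after absorbing the pluriharmonic part into $\psi$, the Peano remainder of $\Phi$ is $o(|z-z_0|^2)$ with gradient $o(|z-z_0|)$, so on the effective support of radius $\sim\sqrt{\log s/s}$ it perturbs each moment by $o(s^{-2})$, the cutoff annulus contributes terms that are exponentially small in $s$, and the oscillations of $\omega$ and $\phi_{j\bar k}$ enter only at order $o(s^{-2})$---but it is the heart of your proof and is only sketched, whereas the paper's corresponding step is two lines. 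One genuine advantage of your route: since your $\alpha$ is constant you never differentiate $\Theta^{-1}$, so merely continuous $\omega$ is handled directly, while the paper must first assume $\omega\in\mathcal{C}^1$ (its $\alpha=\Theta^{-1}f$ gets differentiated in H\"ormander's identity) and then appeal to an approximation argument that it does not spell out.
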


We need the following lemma.

\begin{lem}[{\cite[Proposition 2.1.2]{Hor65}}]\label{lem1}
Let $ D$ be a domain in $\mathbb{C}^n$, $\phi\in\mathcal{C}^2( D)$. For any $\alpha\in\mathcal{D}_{0,1}( D)$, $\alpha=\sum_{j=1}^{n}\alpha_jd\bar{z}_j$,
we have
\begin{equation}\label{base}
\int_ D \sum_{j,k=1}^n\frac{\partial^2\phi}{\partial z_j\partial\bar{z}_k}\alpha_j\overline{\alpha}_k e^{-\phi}+
\int_ D\sum_{j,k=1}^n|\frac{\partial\alpha_j}{\partial\bar{z}_k}|^2e^{-\phi}=
\int_{ D}|\bar{\partial}\alpha|^2e^{-\phi}+\int_ D|\bar\partial^*_\phi\alpha|^2e^{-\phi}.
\end{equation}
\end{lem}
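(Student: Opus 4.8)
The identity \eqref{base} is the weighted Bochner--Kodaira--H\"ormander identity for $(0,1)$-forms, and I would prove it by a direct double integration by parts in which the complex Hessian of $\phi$ is produced by a commutator. Write $\langle u,v\rangle_\phi=\int_D u\bar v\,e^{-\phi}$ for the weighted pairing, and for each index $k$ set $\delta_k=\frac{\partial}{\partial z_k}-\frac{\partial\phi}{\partial z_k}$. Since $\alpha$ is smooth with compact support, every integration by parts below is legitimate and produces no boundary term. First I would identify the weighted adjoint: integrating by parts against $e^{-\phi}$ shows that the formal adjoint of $\frac{\partial}{\partial\bar z_k}$ with respect to $\langle\cdot,\cdot\rangle_\phi$ is $-\delta_k$; consequently $\bar\partial^*_\phi\alpha=-\sum_{j}\delta_j\alpha_j$, and dually $\delta_k^*=-\frac{\partial}{\partial\bar z_k}$. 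I would also record the pointwise expansion
\[
|\bar\partial\alpha|^2=\sum_{j,k}\Big|\frac{\partial\alpha_j}{\partial\bar z_k}\Big|^2-\sum_{j,k}\frac{\partial\alpha_k}{\partial\bar z_j}\overline{\frac{\partial\alpha_j}{\partial\bar z_k}},
\]
obtained by squaring $\bar\partial\alpha=\sum_{j<k}\big(\frac{\partial\alpha_k}{\partial\bar z_j}-\frac{\partial\alpha_j}{\partial\bar z_k}\big)\,d\bar z_j\wedge d\bar z_k$ and using the antisymmetry.

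The heart of the argument is the evaluation of $\int_D|\bar\partial^*_\phi\alpha|^2e^{-\phi}=\sum_{j,k}\int_D(\delta_j\alpha_j)\overline{(\delta_k\alpha_k)}\,e^{-\phi}$. Moving $\delta_j$ off the first factor via $\delta_j^*=-\frac{\partial}{\partial\bar z_j}$, then applying the elementary commutator identity $\big[\delta_j,\frac{\partial}{\partial\bar z_k}\big]=\frac{\partial^2\phi}{\partial z_j\partial\bar z_k}$ (a zeroth-order multiplication operator, which is exactly where the Hessian enters), and finally integrating by parts a second time, I would arrive at
\[
\int_D|\bar\partial^*_\phi\alpha|^2e^{-\phi}=\sum_{j,k}\int_D\frac{\partial^2\phi}{\partial z_j\partial\bar z_k}\alpha_j\overline{\alpha_k}\,e^{-\phi}+\sum_{j,k}\int_D\frac{\partial\alpha_j}{\partial\bar z_k}\overline{\frac{\partial\alpha_k}{\partial\bar z_j}}\,e^{-\phi},
\]
where the reality of $\phi$ is used to replace $\overline{\partial^2\phi/\partial z_k\partial\bar z_j}$ by $\partial^2\phi/\partial z_j\partial\bar z_k$.

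It then remains to add this to $\int_D|\bar\partial\alpha|^2e^{-\phi}$. Relabelling $j\leftrightarrow k$ shows that the negative mixed term $\sum_{j,k}\int_D\frac{\partial\alpha_k}{\partial\bar z_j}\overline{\frac{\partial\alpha_j}{\partial\bar z_k}}e^{-\phi}$ arising from $|\bar\partial\alpha|^2$ cancels exactly against the positive mixed term produced above, leaving the Hessian term together with $\sum_{j,k}\int_D|\partial\alpha_j/\partial\bar z_k|^2e^{-\phi}$; this is precisely \eqref{base}. I expect the main obstacle to be bookkeeping rather than analysis: keeping the conjugations, the two integration-by-parts steps, and the commutator sign conventions mutually consistent, and verifying that the Hessian term emerges with the correct Hermitian ordering of the indices $j,k$. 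No convergence issue arises, since the compact support of $\alpha$ makes all integrals finite and all boundary contributions vanish.
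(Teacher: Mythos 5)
Your proof is correct: the adjoint formula $\bar\partial^*_\phi\alpha=-\sum_j\delta_j\alpha_j$, the commutator $[\delta_j,\partial/\partial\bar z_k]=\partial^2\phi/\partial z_j\partial\bar z_k$, the double integration by parts (justified by compact support), and the cancellation of the mixed terms against those from $|\bar\partial\alpha|^2$ all check out, including the use of the reality of $\phi$ to get the Hermitian ordering. The paper does not prove this lemma but cites it as H\"ormander's Proposition 2.1.2, and your argument is essentially the classical proof given in that cited source, so there is nothing to compare beyond noting the agreement.
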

We now give the proof of Theorem \ref{thm:sharp estimate text}.
\begin{proof}
We give the proof of Theorem \ref{thm:sharp estimate text} in the case that $\omega$ is $\mathcal C^1$,
and the general case follows the proof by an approximation argument.

Let $\psi$ be any smooth strictly plurisubharmonic function on $D$, and $$\omega=i\sum_{j,k=1}^{n}g_{j\bar k}dz_j\wedge d\bar{z}_k.$$
By assumption, we can solve the equation $\bar{\partial}u=f$  for any $\bar\partial $-closed $f\in \mathcal D_{0,1}(D)$, with the estimate
$$\int_ D|u|^2e^{-(\phi+\psi)}\leq\int_ D|f|^2_{i\partial\bar{\partial}\psi+\omega}e^{-(\phi+\psi)}.$$
For any $\alpha\in\mathcal{D}_{0,1}( D)$, we have
\begin{align*}
|(\alpha,f)_{\phi+\psi}|&=|(\alpha,\bar{\partial}u)_{\phi+\psi}|\\
&=|(\bar{\partial}_{\phi+\psi}^*\alpha,u)_{\phi+\psi}|\\
&\leq||u||_{\phi+\psi}||\bar{\partial}_{\phi+\psi}^*\alpha||_{\phi+\psi}.
\end{align*}
From  Lemma \ref{lem1}, we obtain
\begin{equation}\label{eq1}
\begin{split}
&|(\alpha,f)_{\phi+\psi}|^2\\
\leq& \int_ D|f|^2_{i\partial\bar{\partial}\psi+\omega}e^{-(\phi+\psi)}\\
&\times\left(\int_ D \sum_{j,k=1}^n\frac{\partial^2(\phi+\psi)}{\partial z_j\partial\bar{z}_k}\alpha_j\bar{\alpha}_k e^{-(\phi+\psi)}+
\int_ D\sum_{j,k=1}^n|\frac{\partial\alpha_j}{\partial\bar{z}_k}|^2e^{-(\phi+\psi)}-\int_{ D}|\bar{\partial}\alpha|^2e^{-(\phi+\psi)}\right)\\
\leq& \int_ D|f|^2_{i\partial\bar{\partial}\psi+\omega}e^{-(\phi+\psi)}\times\left(\int_ D \sum_{j,k=1}^n\frac{\partial^2(\phi+\psi)}{\partial z_j\partial\bar{z}_k}\alpha_j\bar{\alpha}_k e^{-(\phi+\psi)}+
\int_ D\sum_{j,k=1}^n|\frac{\partial\alpha_j}{\partial\bar{z}_k}|^2e^{-(\phi+\psi)}\right).
\end{split}
\end{equation}

Let $f=\sum_{j=1}^{n}f_jd\bar{z}_j$, set
$$(\alpha_1,\alpha_2,\cdots,\alpha_n)=(f_1,f_2,\cdots,f_n)\left( \frac{\partial^2\psi}{\partial z_j\partial\bar{z}_k}+g_{j\bar k}\right)_{n\times n}^{-1},$$
i.e.,
$$(f_1,f_2,\cdots,f_n)=(\alpha_1,\alpha_2,\cdots,\alpha_n)\left( \frac{\partial^2\psi}{\partial z_j\partial\bar{z}_k}+g_{j\bar k}\right)_{n\times n}.$$
Then inequality (\ref{eq1}) becomes
\begin{equation*}
\begin{split}
&\left(\int_ D\sum_{j,k=1}^n  \left(\frac{\partial^2\psi}{\partial z_j\partial\bar{z}_k}+g_{j\bar k}\right)\alpha_j\bar{\alpha}_ke^{-(\phi+\psi)}\right)^2\\
\leq&\int_{ D}\sum_{j,k=1}^n  \left(\frac{\partial^2\psi}{\partial z_j\partial\bar{z}_k}+g_{j\bar k}\right)\alpha_j\bar{\alpha}_ke^{-(\phi+\psi)}\left(\int_ D \sum_{j,k=1}^n \frac{\partial^2(\phi+\psi)}{\partial z_j\partial\bar{z}_k}\alpha_j\bar{\alpha}_k e^{-(\phi+\psi)}+
\int_ D\sum_{j,k=1}^n|\frac{\partial\alpha_j}{\partial\bar{z}_k}|^2e^{-(\phi+\psi)}\right).
\end{split}
\end{equation*}
Therefore, we can get
\begin{equation}\label{eq2}
\int_{ D}\sum_{j,k=1}^n \left(\frac{\partial^2\phi}{\partial z_j\partial\bar{z}_k}-g_{j\bar k}\right)\alpha_j\bar{\alpha}_ke^{-(\phi+\psi)}+ \int_ D\sum_{j,k=1}^n|\frac{\partial\alpha_j}{\partial\bar{z}_k}|^2e^{-(\phi+\psi)}\geq 0.
\end{equation}
We argue by contradiction. Suppose that $i\partial\bar\partial\phi-\omega$ is not a semipositive $(1,1)$-form  on $ D$, then there is $z_0\in D$, $r>0$, a constant $c>0$, and $\xi=(\xi_1,\xi_2,\cdots,\xi_n)\in
\mathbb{C}^n$ with $|\xi|=1$,  such that
$$\sum_{j,k=1}^{n}\left(\frac{\partial^2\phi}{\partial z_j\partial\bar{z}_k}-g_{j\bar k}(z)\right)\xi_j\bar{\xi}_k<-c$$
holds for any $z\in B(z_0,r):=\{z\in \mc^n: |z-z_0|<r\}\subset D$.
We may assume that $z_0=0$, and write $B(0,r)$ as $B_r$.

Choose $\chi\in \mathcal{C}^\infty_c(B_r)$, satisfying $\chi(z)=1$ for $z\in B_{r/2}$.
Let $f=\bar{\partial}\nu$ with
$$\nu(z)=(\sum_{j=1}^{n}\xi_j\bar{z}_j)\chi(z).$$
Then $$f(z)=\sum_{j=1}^{n}\xi_jd\bar{z}_j$$
for $z\in B_{r/2}$.
For $s>0$, set $$\psi_s(z)=s(|z|^2-\frac{r^2}{4}).$$
It is obvious that $i\partial\bar\partial\psi_s+\omega>0$ on $\text{supp} f$ for $s\gg 1$.
As before, set
$$(\alpha^s_1,\alpha^s_2,\cdots,\alpha^s_n)=(f_1,f_2,\cdots,f_n)( \frac{\partial^2\psi_s}{\partial
	z_j\partial\bar{z}_k}+g_{j\bar{k}})^{-1}=\frac{1}{s}(\delta_{jk}+\frac{g_{j\bar k}}{s})^{-1}(f_1,f_2,\cdots,f_n).$$

We now estimate the integrations on the left hand side of \eqref{eq2} on $B_{r/2}$ and $D\backslash B_{r/2}$ separately,
with $\alpha$ and $\psi$ replaced by $\alpha^s$ and $\psi_s$ respectively.
\begin{itemize}
\item
On $B_{r/2}$, as $s\ra +\infty$, we have
$$\alpha^s(z)=\frac{1}{s}\sum_{j=1}^{n}\xi_jd\bar{z}_j+o(\frac{1}{s}),$$
and
$$\frac{\partial\alpha_j^s}{\partial \bar{z}_k}(z)=o(\frac{1}{s}),\ j,k=1,2,\cdots,n.$$
Hence we have
\begin{equation}\label{eq4}
\begin{split}
&s^2\int_{B_{r/2}}\sum_{j,k=1}^n \left(\frac{\partial^2\phi}{\partial z_j\partial\bar{z}_k}-g_{j\bar k}\right)\alpha_j\bar{\alpha}_ke^{-(\phi+\psi_s)}+ s^2\int_{B_{r/2}}\sum_{j,k=1}^n|\frac{\partial\alpha_j}{\partial\bar{z}_k}|^2e^{-(\phi+\psi_s)}\\
&\leq\int_{B_{r/2}}(-c+o(1))e^{-(\phi+\psi_s)}.
\end{split}
\end{equation}

\item
On $D\backslash B_{r/2}$,
since $f$ has compact support,
there is a constant $C>0$, such that $|\alpha^s_j|\leq \frac{C}{s}$
and $|\frac{\partial\alpha^s_j}{\partial \bar{z}_k}|\leq \frac{C}{s}$ hold for $j,k=1,2,\cdots,n$ and $s>0$.
Note also that $\lim_{s\rightarrow+\infty}\psi_s(z)=+\infty$ for $z\in  D\setminus \overline{B}_{r/2}$.
We get
\begin{equation}\label{eq5}
\lim_{s\rightarrow+\infty}\left(s^2\int_{D\setminus B_{r/2}}\sum_{j,k=1}^n \left(\frac{\partial^2\phi}{\partial z_j\partial\bar{z}_k}-g_{j\bar k}\right)\alpha_j\bar{\alpha}_ke^{-(\phi+\psi_s)}+ s^2\int_{D\setminus B_{r/2}}\sum_{j,k=1}^n|\frac{\partial\alpha_j}{\partial\bar{z}_k}|^2e^{-(\phi+\psi_s)}\right)=0.
\end{equation}
\end{itemize}

Note that $\psi_s\leq 0$ on $B_{r/2}$,
combining equalities \eqref{eq4} and \eqref{eq5}
we have,
 $$\int_{ D}\sum_{j,k=1}^n \left(\frac{\partial^2\phi}{\partial z_j\partial\bar{z}_k}-g_{j\bar k}\right)\alpha^s_j\bar{\alpha}^s_ke^{-(\phi+\psi_s)}+ \int_ D\sum_{j,k=1}^n|\frac{\partial\alpha^s_j}{\partial\bar{z}_k}|^2e^{-(\phi+\psi_s)}<0$$ for $s>>1$,
which contradicts to the inequality (\ref{eq2}).

\end{proof}

\begin{rem} From the proof, we can see that in Theorem \ref{thm:sharp estimate text}, $\psi$
can be taken to be of the form $a|z-w|^2+b$, with $a\gg 1$ and $b\in \mr$.
\end{rem}

\begin{rem}
In Theorem \ref{thm:sharp estimate},
we can allow $\phi$ to have poles, with the condition that
$Pol(\phi):=\phi^{-1}(-\infty)$ is closed in $D$,
and $\phi$ is upper semi-continuous on $D$, and is $\mathcal C^2$-smooth and satisfies the optimal $L^2$-estimate property on $D\backslash Pol(\phi)$.
\end{rem}

\subsection{In terms of the multiple coarse $L^p$-estimate property}
The purpose of this subsection is to prove Theorem \ref{thm:coarse estimate},
which is the following

\begin{thm}[= Theorem 1.3]\label{thm:coarse estimate text}
Let $D$ be a domain in $\mc^n$ and $\phi:D\ra \mr$ be a continuous function.
%Assume that $Pole(\phi)$ is closed in $D$ and $\phi|_{D\backslash Pole(\phi)}$ is continuous.
If $\phi$ satisfies \emph{the multiple coarse $L^p$-estimate property} for some $p>1$,
then $\phi$ is plursubharmonic on $D$.
\end{thm}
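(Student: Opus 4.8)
The plan is to \emph{deduce the multiple coarse $L^p$-extension property from the multiple coarse $L^p$-estimate property}, and then invoke Theorem \ref{thm:coarse estension}. Since plurisubharmonicity is a local property, it suffices to fix a ball $B=B(z_0,r_0)\Subset D$ and establish the multiple coarse $L^p$-extension property on the smaller ball $B'=B(z_0,r_0/2)$; Theorem \ref{thm:coarse estension} applied on $B'$ then yields plurisubharmonicity there, and letting $z_0$ and $B$ vary gives it on all of $D$. Throughout I will use that $\phi$, being continuous, is \emph{uniformly} continuous on the compact set $\overline{B}$, so that its oscillation over small balls tends to $0$ uniformly in the center.

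Fix $m\ge1$ and a base point $z\in B'$. Choose a cut-off $\chi\in\mathcal C^\infty_c(B(z,r))$ with $\chi\equiv1$ on $B(z,r/2)$, where $r=r_m\to0$ is fixed later and $r<r_0/2$ so that $B(z,r)\Subset D$. Then $f:=\bar\partial\chi$ is a $\bar\partial$-closed smooth $(0,1)$-form with compact support that \emph{vanishes near} $z$. As the smooth strictly plurisubharmonic weight I take the regularized logarithmic pole
$$\psi_m(w)=n\log\big(|w-z|^2+\delta_m^2\big)+A\,|w-z|^2,$$
with a fixed $A>0$ ensuring $i\partial\bar\partial\psi_m\ge A\,\mathrm{Id}$, and a second parameter $\delta_m\to0$ chosen \emph{after} $r_m$. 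Applying the multiple coarse $L^p$-estimate property, I solve $\bar\partial u_m=f$ on $D$ with
$$\int_D|u_m|^pe^{-m\phi-\psi_m}\le C_m\int_D|f|^p_{i\partial\bar\partial\psi_m}e^{-m\phi-\psi_m}.$$
Because $f\equiv0$ near $z$, the solution $u_m$ is holomorphic near $z$; setting $F_m:=(\chi-u_m)/(1-u_m(z))$ produces a function holomorphic on all of $D$ (since $\bar\partial(\chi-u_m)=f-f=0$) with $F_m(z)=1$.

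The heart of the argument is to show $|u_m(z)|\le\tfrac12$. The smoothness of $\psi_m$ forbids a genuine logarithmic pole, so I exploit the regularization instead: on the annulus $\{\delta_m\le|w-z|\le r/2\}$, where $u_m$ is holomorphic and $\chi\equiv1$, one has $e^{-\psi_m}\gtrsim|w-z|^{-2n}$, and the sub-mean value property of the plurisubharmonic function $|u_m|^p$ over spheres turns the radial integral into
$$\int_{\delta_m\le|w-z|\le r/2}|u_m|^p\,|w-z|^{-2n}\,dV\ \gtrsim\ |u_m(z)|^p\int_{\delta_m}^{r/2}\frac{d\rho}{\rho}=|u_m(z)|^p\,\log\frac{r}{2\delta_m}.$$
On the other hand, the right-hand side of the estimate has integrand supported in $\{r/2\le|w-z|\le r\}$, where $\psi_m$ and $i\partial\bar\partial\psi_m$ are bounded independently of $\delta_m$, so that side is $\le C_m\,K(r,A,z)\,e^{-m(\phi(z)-o(1))}$. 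Comparing the two sides and cancelling $e^{-m\phi(z)}$ bounds $|u_m(z)|^p$ by a quantity carrying $\log\frac{r}{2\delta_m}$ in the denominator; for fixed $m,r,A$ this tends to $0$ as $\delta_m\to0$, so I may choose $\delta_m$ small enough (uniformly in $z\in B'$, by uniform continuity) to force $|u_m(z)|\le\tfrac12$, whence $|1-u_m(z)|\ge\tfrac12$.

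It remains to estimate $\int_{B'}|F_m|^pe^{-m\phi}$. Using $|1-u_m(z)|\ge\tfrac12$ and the convexity inequality $|a+b|^p\le2^{p-1}(|a|^p+|b|^p)$ (valid since $p>1$), I split
$$\int_{B'}|F_m|^pe^{-m\phi}\le 2^{2p-1}\Big(\int_{B(z,r)}e^{-m\phi}+\int_{B'}|u_m|^pe^{-m\phi}\Big).$$
The first term is $\le\mu(B_r)\,e^{m\varepsilon_r}e^{-m\phi(z)}$ with $\varepsilon_r\to0$ by continuity, and the second is controlled by $e^{\sup_{\overline B}\psi_m}$ times the already-estimated $\int_D|u_m|^pe^{-m\phi-\psi_m}$, since $\psi_m$ is bounded above on $\overline B$ uniformly for $\delta_m\le1$. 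This yields $\int_{B'}|F_m|^pe^{-m\phi}\le C'_me^{-m\phi(z)}$, uniform in $z\in B'$; finally, choosing $r_m\to0$ (say $r_m=1/m$) makes the oscillation terms and the negative powers of $r_m$ contribute only $e^{o(m)}$, so $\log C_m/m\to0$ gives $\log C'_m/m\to0$ and verifies the extension property on $B'$. The main obstacle, as the third step shows, is exactly this forcing of $|u_m(z)|$ below $\tfrac12$ with only a \emph{smooth} weight: one must balance the three scales $m$, $r_m$, $\delta_m$ so that the divergence $\log(r/2\delta_m)$ annihilates $u_m(z)$ while no accumulated constant outgrows $e^{o(m)}$, all uniformly in $z$; this uniformity, furnished by uniform continuity of $\phi$ on compacta, is what replaces the local Hölder hypothesis of \cite{HI}.
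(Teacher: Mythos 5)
Your proof is correct, and it shares the paper's overall skeleton --- localize so that $\phi$ is uniformly continuous on a compact set, deduce the multiple coarse $L^p$-extension property by solving $\bar\partial$ for the differential of a cut-off against a regularized log-pole weight with cut-off scale $r\sim 1/m$, then invoke Theorem \ref{thm:coarse estension} --- but it handles the one genuinely delicate step differently. The paper fixes the cut-off scale $\epsilon$ and lets the regularization parameter $\delta\searrow 0$: the solutions $u_{\epsilon,\delta}$ are bounded in weighted $L^p$ spaces, and reflexivity of $L^p$ (this is exactly where the hypothesis $p>1$ enters) produces a weak limit $u_\epsilon$ satisfying the estimate against the genuine pole $e^{-\psi_0}\sim |z-w|^{-2n}$, whose non-integrability forces $u_\epsilon(w)=0$ exactly, so $\chi-u_\epsilon$ is already normalized at $w$. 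You never pass to that limit: keeping $\delta_m>0$, you use that $u_m$ is holomorphic on $B(z,r/2)$ (where $\bar\partial\chi=0$), hence $|u_m|^p$ is plurisubharmonic there, and integrating its sub-mean value inequality over the spheres filling the annulus $\{\delta_m\le |w-z|\le r/2\}$ produces the divergent factor $\log\frac{r}{2\delta_m}$ on the left-hand side of the $\bar\partial$-estimate, while the right-hand side (supported where $|w-z|\ge r/2$) is bounded independently of $\delta_m$; choosing $\delta_m$ small --- uniformly in the base point $z$, thanks to the uniform modulus of continuity of $\phi$ --- forces $|u_m(z)|\le \tfrac12$, and the renormalization $F_m=(\chi-u_m)/(1-u_m(z))$ costs only a factor $2^p$. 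This trade has real content: your route avoids weak compactness and the diagonal argument entirely, and in fact never needs $p>1$ (the convexity inequality you attribute to $p>1$ holds for all $p\ge 1$, and for $0<p<1$ the even simpler bound $|a+b|^p\le |a|^p+|b|^p$ serves), so your argument establishes the theorem for every $p>0$, strictly more than the stated result, whereas the paper's weak-limit argument genuinely requires reflexivity. What the paper's route buys in exchange is exact vanishing $u_\epsilon(w)=0$ with no renormalization and only two scales ($m$ and $\epsilon$) to balance; your version must --- and, as written, does --- verify that the third parameter $\delta_m$ can be slaved to $m$, $r_m$ and the oscillation of $\phi$ so that every accumulated constant remains $e^{o(m)}$ uniformly in $z$.
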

\begin{proof}
We prove the theorem by modifying the idea in \cite{HI}.
We will show that $( D, \phi)$ satisfies the multiple coarse $L^p$-extension property.
Note that $\phi|_{D'}$ also satisfies the multiple coarse $L^p$-estimate property
for any open set $D'\subset D$, replacing $D$ by a relatively compact open subset of it,
we may assume that $D$ is bounded and $\phi$ is uniformly continuous on $D$.

Fix  an integer $m>0$ and $w\in D$. We will construct a holomorphic function $f \in  \mathcal{O}(D)$ such that $f(w) = 1$ and
$$ \int_{  D} |f|^p e^{-m\phi} \leq C'_m e^{-m\phi(w)},$$
where $C'_m$ is a constant independent of the choice of $w \in   D$ satisfying $\lim_{m\rightarrow\infty}\frac{\log C'_m}{m}=0$.
%Note that, by assumption that $\phi$ is H\"older continuous, we have that $\phi(w) \neq -\infty$.
%In the following, we assume $w = 0$ for the simplicity.

Let $\chi = \chi(t)$ be a smooth function on $\mathbb{R}$, such that
\begin{itemize}
	\item $\chi(t) = 1$ for $t \leq 1/4$,
	\item $\chi(t) = 0$ for $t \geq 1$, and
	\item $|\chi'(t)|\leq 2$ on $\mathbb{R}$.
\end{itemize}
Define a (0,1)-form $\alpha_\epsilon$ by
\begin{align*}
\alpha_\epsilon &:= \dbar \chi\left({|z-w|^2 \over \epsilon^2 } \right)= \chi'\left(\frac{|z-w|^2}{\epsilon^2} \right) \sum_j \frac{z_j-w_j}{\epsilon^2} d\overline{z}_j,
\end{align*}
and set
\begin{equation*}
\psi_{\delta} := |z|^2  + n \log(|z-w|^2 + \delta^2),
\end{equation*}
where $0<\epsilon\leq1$ and $\delta$ are positive parameters.
From the multiple coarse $L^p$-estimate property,
we obtain a smooth function $u_{\epsilon, \delta}$ on $  D$ such that $\dbar u_{\epsilon,\delta} = \alpha_\epsilon$ and
\begin{equation}
\int_{  D} |u_{\epsilon,\delta}|^p e^{-(m\phi + \psi_{\delta})} \leq C_m\int_{  D} |\alpha_\epsilon|^p_{i\ddbar \psi_{\delta}} e^{-(m\phi+ \psi_{\delta})}.\label{eqn:dbar-est}
\end{equation}

Since
\begin{equation*}%{\label{ineq 1}}
\begin{split}%{\label{ineq 1}}
|\alpha_\epsilon|_{\sqrt{-1}\ddbar \psi_{ \delta}}
& = | \chi' (\frac{|z-w|^2}{\epsilon^2} ) | \cdot \frac{1}{\epsilon^2} \cdot
|\sum_j (z_j-w_j) d\overline{z}_j |_{i\ddbar \psi_{\delta}}\\
& \leq  | \chi' (\frac{|z-w|^2}{\epsilon^2} ) | \cdot \frac{1}{\epsilon^2} \cdot |\sum_j (z_j-w_j)
d\overline{z}_j |_{i\ddbar |z|^2}\\
& =| \chi' (\frac{|z-w|^2}{\epsilon^2} )| \cdot \frac{1}{\epsilon^2}|z-w|,
\end{split}
\end{equation*}
the support of $\chi'\left(\frac{|z-w|^2}{\epsilon^2} \right)$ is in $\{1/4 \leq |z-w|^2 /\epsilon^2 \leq 1 \}$, and
$\psi_{\delta}\geq  2n\log|z-w|$, we have that
\begin{equation}\label{ineq t3}
\begin{split}
(\text{RHS of (\ref{eqn:dbar-est})}) &\leq C_m\int_{\{\epsilon^2 / 4 \leq |z-w|^2 \leq \epsilon^2\}}| \chi' (\frac{|z-w|^2}{\epsilon^2} )|^p \frac{1}{\epsilon^{2p}}|z-w|^p e^{-(m\phi + \psi_{\delta})}\\
& \leq C_m\frac{2^p}{\epsilon^{2p}} \int_{\{\epsilon^2 / 4 \leq |z-w|^2 \leq \epsilon^2\}} |z-w|^pe^{-(m\phi + \psi_{ \delta})}\\
&\leq C_m\frac{2^p}{\epsilon^{2p}}\int_{\{\epsilon^2 / 4 \leq |z-w|^2 \leq \epsilon^2\}} \epsilon^p e^{-m\inf_{B(w,\epsilon)}\phi} e^{-2n\log|z-w|}\\
&\leq CC_m\frac{e^{-m\inf_{B(w,\epsilon)}\phi} }{\epsilon^p},
\end{split}
\end{equation}
where $C=2^{p+2n}\mu(B_1)$.

To summarize, we have obtained a smooth function $u_{\epsilon,\delta}$ on $  D$ such that
\begin{itemize}
	\item $\dbar u_{\epsilon,\delta} = \alpha_\epsilon$, and
	\item the following estimate holds:
	\begin{equation}
	\int_{  D} |u_{\epsilon,\delta}|^p e^{-(m\phi + \psi_{\delta})} \leq CC_m\frac{e^{-m\inf_{\mathbf{B}(w,\epsilon)}\phi} }{\epsilon^p}.\label{eqn:matome}
	\end{equation}
\end{itemize}

Note that the weight function $\psi_{\delta}$ is decreasing when $\delta \searrow 0$,  $e^{-\psi_{\delta}}$ is increasing when $\delta \searrow 0$.
Fix $\delta_0>0$. Then, for $\delta < \delta_0$,   we have that
$$\int_{  D} |u_{\epsilon,\delta}|^p e^{-(m\phi + \psi_{\delta_0})} \leq \int_{  D} |u_{\epsilon,\delta}|^p e^{-(m\phi + \psi_{\delta})} \leq CC_m\frac{e^{-m\inf_{B(w,\epsilon)}\phi} }{\epsilon^p}.$$
Thus $\{u_{\epsilon,\delta}\}_{\delta < \delta_0}$ forms a bounded sequence in $L^p(  D, e^{-(m\phi + \delta_0)})$. Note that $p>1$, we can choose a sequence $\{u_{\epsilon,\delta^{(k)}}\}_k$ in $L^p(D, e^{-(m\phi + \delta_0)})$
which weakly converges to some $u_\epsilon\in L^p(D, e^{-(m\phi + \delta_0)})$, satisfying
$$\int_{  D} |u_\epsilon|^p e^{-(m\phi + \psi_{\delta_0})} \leq  CC_m\frac{e^{-m\inf_{B(w,\epsilon)}\phi} }{\epsilon^p}. $$
Repeating this argument for a sequence $\{\delta_j\}$ decreasing to $0$, and by diagonal argument,
we can select a sequence $\{u_{\epsilon,\delta^k}\}_k$ which weakly converges to $u_\epsilon$ in $L^p(D, e^{-(m\phi + \psi_{\delta_j})})$ and $u_\epsilon$ satisfies estimates
$$\int_{  D} |u_\epsilon|^p e^{-(m\phi + \psi_{\delta_j})} \leq  CC_m\frac{e^{-m\inf_{B(w,\epsilon)}\phi} }{\epsilon^p}$$
for all $j$.
By the monotone convergence theorem,
$$\int_{  D} |u_\epsilon|^p e^{-(m\phi + \psi_{0})} \leq  CC_m\frac{e^{-m\inf_{B(w,\epsilon)}\phi} }{\epsilon^p}. $$
Since $\bar\partial$ is weakly continuous, we also have $\dbar u_\epsilon = \alpha_\epsilon.$

Since $\frac{1}{|z-w|^{2n}}$ is not integrable near $w$, $u_\epsilon(w)$ must be $0$.
Let $f_\epsilon := \chi(|z-w|^2/\epsilon^2) - u_\epsilon$. Then $f_\epsilon\in \mathcal O(D)$, $f_\epsilon(0) = 1$ and
\begin{equation}\label{eqn:4}
\begin{split}
\int_{  D} |f_\epsilon|^p e^{-m\phi} &\leq \left(\left(\int_{  D} |\chi({|z-w|^2\over\epsilon^2})|^p e^{-m\phi}\right)^{1/p} + \left(\int_{  D} |u_\epsilon|^p e^{-m\phi}\right)^{1/p}\right)^p\\
&\leq 2^p\left(\int_{  D} |\chi({|z-w|^2\over\epsilon^2})|^p e^{-m\phi}+\int_{  D} |u_\epsilon|^p e^{-m\phi}\right)
%\label{eqn:triangle}
\end{split}
\end{equation}

Since $\chi \leq 1$ and the support of $\chi(|z-w|^2 / \epsilon^2) $ is contained in $\{|z-w|^2 \leq \epsilon^2 \}$ and $0<\epsilon\leq1$, we have
$$\int_{  D} |\chi({|z-w|^2\over\epsilon^2})|^p e^{-m\phi}\leq \mu( B_1)e^{-m \inf_{ B(w,\epsilon)} \phi}.$$
We also have
\begin{align*}
\int_{  D}|u_\epsilon|^p e^{-m\phi} &\leq \sup_{z \in   D} e^{\psi_0(z)} \cdot \int_{  D}
|u_\epsilon|^p e^{-(m\phi+\psi_0) } \\ &\leq
\sup_{z \in   D}e^{\psi_0(z)}  \cdot CC_m\frac{e^{-m\inf_{ B(w,\epsilon)}\phi} }{\epsilon^p} \\ &\leq
C'C_m\frac{e^{-m\inf_{ B(w,\epsilon)}\phi} }{\epsilon^p},
\end{align*}
where $C'$ is a constant depends only on the diameter of $D$. We may assume $C_m\geq1$.
Combining these estimates with \eqref{eqn:4}, we obtain that
\begin{equation*}
\int_{  D} |f_\epsilon|^p e^{-m\phi} \leq C'' C_m\frac{1}{\epsilon^p} e^{-m \inf_{B(w,\epsilon)} \phi},
\end{equation*}
where $C''$ is a constant independent of $m$ and $w$.

Let
$$O_\epsilon=\sup\limits_{z,w\in D, |z-w|\leq\epsilon} |\phi(z)-\phi(w)|.$$
By the uniform continuity of $\phi$, $O_\epsilon$ is finite and goes to 0 as $\epsilon\ra 0$.
Let $\epsilon := 1/m$. We have  $|m\phi(z) - m \phi(w)| \leq m O_{1/m}$ for $|z-w| \leq 1/m$. Then
\begin{align}
\int_{  D} |f_{1/m}|^p e^{-m\phi}
&\leq C''C_m m^{p} e^{-m\phi(w)+m O_{1/m}}\nonumber\\
&= C''C_mm^{p} e^{m O_{1/m}}e^{-m\phi(w)}.\label{eqn:last}
\end{align}
Let $C'_m=C''C_mm^{p} e^{m O_{1/m}}$, we have
$$\frac{\log C'_m}{m}=\frac{\log (C''C_mm^{p})}{m}+ O_{1/m}\to 0,$$
which implies $\phi$ satisfies the multiple coarse $L^p$-extension property on $D$,
and hence $\phi$ is plurisubharmonic by Theorem \ref{thm:coarse estension}.
\end{proof}

\begin{rem}
In Theorem \ref{thm:coarse estimate text},
we can allow $\phi$ to have poles, with the condition that
$Pol(\phi):=\phi^{-1}(-\infty)$ is closed in $D$,
and $\phi$ is upper semi-continuous on $D$, and is continuous and satisfies the multiple coarse $L^p$-estimate property on $D\backslash Pol(\phi)$.
\end{rem}

\section{Characterizations of plurisubharmonic functions in terms of $L^p$-extensions of holomorphic functions}
%\subsection{In terms of the optimal $L^p$-extension property}
%\subsection{In terms of the multiple coarse $L^p$-extension property}
In this section, we discuss characterizations of plurisubharmonic functions
in terms of $L^p$-extensions of holomorphic functions.
The aim is to prove Theorem \ref{thm:sharp extension} and \ref{thm:coarse estension}.
The idea is inspired by Guan-Zhou's method in \cite{GZh15d}.

We first prove a lemma as follows.
\begin{lem}\label{psh}
Let $D\subset\mathbb{C}^n$ be a domain, and $\phi$ be an upper semi-continuous function on $D$.
Then $\phi$ is plurisubharmonic
if and only if for any $z_0\in D$ and any holomorphic cylinder $P$ with with $z_0+P\subset\subset D$,
$$\phi(z_0)\leq \frac{1}{\mu(P)}\int_{z_0+P}\phi.$$
\end{lem}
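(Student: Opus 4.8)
The plan is to prove both implications from the classical submean-value characterization of subharmonicity, applied one complex direction and one slice at a time.

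For the ``only if'' direction, since plurisubharmonicity is preserved under the unitary change of coordinates $A$, I may assume the cylinder is $z_0+P_{r,s}=z_0+(\Delta_r\times B_s)$, where $\Delta_r\subset\mc$ is the disc of radius $r$ and $B_s\subset\mc^{n-1}$ is the ball of radius $s$. I would exploit this product structure by averaging one factor at a time. First, for each fixed $z'\in B_s$ the function $z_1\mapsto\phi(z_0+(z_1,z'))$ is subharmonic on $\Delta_r$, so the solid (areal) submean inequality gives $\phi(z_0+(0,z'))\le\frac{1}{\pi r^2}\int_{\Delta_r}\phi(z_0+(z_1,z'))\,d\lambda(z_1)$. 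Integrating over $z'\in B_s$ and using $\mu(P_{r,s})=\pi r^2\,\mu(B_s)$ reduces the claim to the slice inequality $\phi(z_0)\le\frac{1}{\mu(B_s)}\int_{B_s}\phi(z_0+(0,z'))\,d\lambda(z')$. This last inequality holds because $z'\mapsto\phi(z_0+(0,z'))$ is the restriction of $\phi$ to a complex affine subspace, hence plurisubharmonic, hence subharmonic on $\mr^{2(n-1)}$, and so satisfies the solid submean inequality over the ball $B_s$. Chaining the two inequalities yields the assertion.

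For the ``if'' direction, I want to show that $\zeta\mapsto\phi(z_0+\zeta v)$ is subharmonic for every $z_0\in D$ and every unit vector $v\in\mc^n$, which is exactly plurisubharmonicity. I would choose $A\in U(n)$ with $Ae_1=v$, apply the hypothesis to the thin cylinder $z_0+A(P_{r,s})$ with $r$ fixed small enough that the closed disc $\{z_0+\zeta v:|\zeta|\le r\}$ lies compactly in $D$, and then let $s\to0$. After the measure-preserving unitary change of variables, the hypothesis reads
\[
\phi(z_0)\le\frac{1}{\pi r^2}\int_{|w_1|<r}g_s(w_1)\,d\lambda(w_1),\qquad g_s(w_1):=\frac{1}{\mu(B_s)}\int_{|w'|<s}\phi\big(z_0+w_1v+A(0,w')\big)\,d\lambda(w').
\]
Since $\phi$ is upper semi-continuous it is bounded above on a compact neighborhood of the disc, so the reverse Fatou lemma applies; combined with the pointwise bound $\limsup_{s\to0}g_s(w_1)\le\phi(z_0+w_1v)$, which again follows from upper semi-continuity, this yields the one-variable solid submean inequality
\[
\phi(z_0)\le\frac{1}{\pi r^2}\int_{|\zeta|<r}\phi(z_0+\zeta v)\,d\lambda(\zeta).
\]

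Finally I would note that this inequality holds at every point of the line $z_0+\mc v$ (apply it with $z_0$ replaced by $z_0+\zeta_0 v$) and for all sufficiently small radii, so the upper semi-continuous function $\zeta\mapsto\phi(z_0+\zeta v)$ satisfies the local solid submean-value property and is therefore subharmonic by the standard characterization of subharmonic functions; as $z_0$ and $v$ are arbitrary, $\phi$ is plurisubharmonic. I expect the main obstacle to be the limit $s\to0$ in the ``if'' direction: one must justify the interchange of $\limsup$ with integration (through the uniform upper bound coming from upper semi-continuity together with reverse Fatou) and the pointwise semicontinuity estimate for $g_s$, after which the reduction to the classical solid submean-value characterization of subharmonicity is routine.
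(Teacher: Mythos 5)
Your proof is correct and takes essentially the same route as the paper's: for the ``if'' direction both arguments shrink a thin cylinder $z_0+A(P_{r,s})$ as $s\to 0$, use the upper bound coming from upper semi-continuity to justify the reverse Fatou step together with the pointwise estimate $\limsup_{s\to 0}g_s(w_1)\le \phi(z_0+w_1v)$, and thereby reduce to the solid submean inequality on each complex line. The paper dismisses the ``only if'' direction as an immediate consequence of the mean-value inequality for plurisubharmonic functions, where you spell out the Fubini slicing, but that is only a difference in level of detail, not of method.
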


\begin{proof}
The "only if" part follows easily from the mean value inequality for plurisubharmonic functions.
We now give the proof of the "if" part.

For any point $z_0\in D$, any $\xi\in\mathbb{C}^n$ with $|\xi|=1$, and $r>0$
such that $\{z_0+\zeta_1\xi:|\zeta_1|\leq r_1\}\subset D$.
Choose an orthonormal basis $f_1,f_2,\cdots, f_n$ of $\mathbb{C}^n$ with $f_1=\xi$.
There is $s_0>0$ such that $z_0+\zeta_1\xi+\sum_{j=2}^{n}\zeta_jf_j\in D$ for all $|\zeta_1|\leq r$
and $\sum_{j=2}^{n}|\zeta_j|^2\leq s_0^2$.
Let
$A=(f_1,f_2,\cdots,f_n)$, then $z_0+A(P_{r,s})\subset\subset D$, for $s<s_0$.
By assumption, we have
\begin{equation}\label{lem psh eq1}
\begin{split}
\phi(z_0)&\leq \frac{1}{\mu(A(P_{r,s}))}\int_{z_0+A(P_{r,s})}\phi\\
&=\frac{1}{\mu( P_{r,1})}\int_{P_{r,1}}\phi(z_0+\zeta_1\xi+s\sum_{j=2}^{n}\zeta_jf_j).
\end{split}
\end{equation}
As $\phi$ is upper semi-continuous on $D$ and $\{z_0+\zeta_1\xi:|\zeta_1|\leq r\}$ is compact,
we may assume $\phi\leq 0$
on $z_0+A (P_{r,s_0})$.
Then we have
\begin{equation*}
\begin{split}
\phi(z_0)&\leq \frac{1}{\mu( P_{r,1})}\int_{ P_{r,1}}\limsup\limits_{s\rightarrow 0}\phi(z_0+\zeta_1\xi+s\sum_{j=2}^{n}\zeta_jf_j)\\
&\leq\frac{1}{\mu( P_{r,1})}\int_{ P_{r,1}}\phi(z_0+\zeta_1\xi)\\
&=\frac{1}{\pi r^2}\int_{\{\zeta_1\in\mathbb{C}:|\zeta_1|<r\}}\phi(z_0+\zeta_1\xi),
\end{split}
\end{equation*}
where the first inequality follows from \eqref{lem psh eq1} and Fatou's lemma,
the second inequality follows from the fact that $\phi$ is upper semi-continuous,
and the last equality holds since $\phi(z_0+\zeta_1\xi)$ is independent of $\zeta_2,\cdots, \zeta_n$.
\end{proof}

\begin{thm}[= Theorem 1.4]\label{thm:sharp extension text}
Let $\phi:D\ra [-\infty, +\infty)$ be an upper semi-continuous function on a domain $D$ in $\mc^n$.
If $\phi$ satisfies \emph{the optimal $L^p$-extension property} for some $p>0$,
then $\phi$ is plurisubharmonic on $D$.
\end{thm}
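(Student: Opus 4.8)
The plan is to reduce the statement, via Lemma~\ref{psh}, to verifying the sub-mean-value inequality over holomorphic cylinders, and then to convert the sharp constant in the optimal $L^p$-extension property into exactly that inequality by means of Jensen's inequality, using the extending function's own sub-mean-value property to dispose of the unwanted term. Concretely, fix $z_0\in D$ and a holomorphic cylinder $P$ with $z_0+P\subset\subset D$; by Lemma~\ref{psh} it suffices to show
$$\phi(z_0)\leq \frac{1}{\mu(P)}\int_{z_0+P}\phi.$$
If $\phi(z_0)=-\infty$ this is trivial, so I assume $\phi(z_0)\neq-\infty$ and apply the optimal $L^p$-extension property to obtain $f\in\mathcal{O}(z_0+P)$ with $f(z_0)=1$ and
$$\frac{1}{\mu(P)}\int_{z_0+P}|f|^pe^{-\phi}\leq e^{-\phi(z_0)}.$$

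Next I would introduce the normalized Lebesgue measure $\nu=\mu(P)^{-1}\lambda$ on $z_0+P$, which is a probability measure, and set $h=|f|^pe^{-\phi}\geq 0$. The estimate reads $\int h\,d\nu\leq e^{-\phi(z_0)}<\infty$, so concavity of $\log$ (Jensen's inequality, valid with the left-hand side possibly equal to $-\infty$) gives $\int\log h\,d\nu\leq\log\int h\,d\nu\leq-\phi(z_0)$, that is,
$$\int_{z_0+P}\bigl(p\log|f|-\phi\bigr)\,d\nu\leq -\phi(z_0).$$
I would then justify splitting the integral: since $\phi$ is upper semi-continuous it is bounded above on the compact set $\overline{z_0+P}$, hence $e^{-\phi}$ is bounded below by a positive constant there and $\int|f|^p\,d\nu<\infty$; combined with the elementary bound $(\log t)^+\leq t$ this yields $\int(\log|f|)^+\,d\nu<\infty$, so $\int\log|f|\,d\nu$ is well defined and $\int\phi\,d\nu$ is well defined in $[-\infty,+\infty)$. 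The displayed inequality may therefore be rearranged as
$$\int_{z_0+P}\phi\,d\nu\;\geq\;\phi(z_0)+p\int_{z_0+P}\log|f|\,d\nu.$$

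To finish, I use that $f$ is holomorphic on $z_0+P$, so $\log|f|$ is plurisubharmonic there, and the sub-mean-value inequality over the cylinder gives $\int_{z_0+P}\log|f|\,d\nu\geq\log|f(z_0)|=0$. Since $p>0$, substituting this into the previous display yields $\int_{z_0+P}\phi\,d\nu\geq\phi(z_0)$, which is precisely the inequality required by Lemma~\ref{psh}; hence $\phi$ is plurisubharmonic.

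The crux of the argument is the pairing in the last two displays: recognizing that Jensen's inequality turns the sharp $L^p$-bound into a sub-mean-value statement carrying an extra term $p\int\log|f|\,d\nu$, and that this term has the favorable sign because $\log|f|$ is itself plurisubharmonic with $\log|f(z_0)|=0$. I expect the main obstacle to be this idea rather than any hard estimate; the remaining work is purely measure-theoretic bookkeeping, namely verifying that the relevant positive parts are integrable so that the splitting and rearrangement are legitimate, and this follows from the upper semi-continuity of $\phi$ together with the finiteness of $\int_{z_0+P}|f|^pe^{-\phi}$.
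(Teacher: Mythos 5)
Your proposal is correct and follows essentially the same route as the paper's own proof: reduce to the cylinder mean-value inequality via Lemma~\ref{psh}, apply Jensen's inequality to the optimal $L^p$-estimate, and kill the extra term using that $p\log|f|$ is plurisubharmonic with $f(z_0)=1$. The only difference is cosmetic: you supply the measure-theoretic bookkeeping (integrability of positive parts, legitimacy of splitting the integral) that the paper leaves implicit, and you invoke the sub-mean-value inequality for $\log|f|$ directly where the paper cites Fubini's theorem for the same fact.
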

\begin{proof}
Take arbitrary $z_0\in D$, such that $\phi(z_0)\neq -\infty$.
For any holomorphic cylinder $P$ with $z_0+P\subset D$,
by the optimal $L^p$-extension property and taking log, we get that
$$\phi(z_0)\leq -\log\Big(\frac{1}{ \mu(P)}\int_{z_0+P}|f|^pe^{-\phi}\Big).$$
By Jensen's inequality, we have
\begin{align*}
\phi(z_0)&\leq \frac{1}{ \mu(P)}\int_{z_0+P}-\log(|f|^pe^{-\phi})\\
&=\frac{1}{ \mu(P)}\int_{z_0+P}\phi-\frac{1}{ \mu(P)}\int_{z_0+P}p\log|f|.
\end{align*}
Note that $p\log|f|$ is a plurisubharmonic function and $f(z_0)=1$,
by Fubini's theorem, we have $$-\frac{1}{ \mu(P)}\int_{z_0+P}p\log|f|\leq 0.$$
Therefore, we get the desired mean-value inequality,
$$\phi(z_0)\leq\frac{1}{ \mu(P)}\int_{z_0+P}\phi.$$
By Lemma \ref{psh}, $\phi$ is plurisubharmonic on $D$.
\end{proof}

\begin{thm}[= Theorem 1.5]\label{thm:coarse estension text}
Let $\phi:D\ra [-\infty, +\infty)$ be an upper semi-continuous function on a domain $D$ in $\mc^n$.
If $\phi$ satisfies \emph{the multiple coarse $L^p$-extension property} for some $p>0$,
then $\phi$ is plursubharmonic on $D$.
\end{thm}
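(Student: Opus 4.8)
The plan is to mimic the proof of Theorem~\ref{thm:sharp extension text}, reducing everything to the mean-value characterization of Lemma~\ref{psh} and then inserting the $m$-th root mechanism to absorb the constants $C_m$. By Lemma~\ref{psh} it suffices to prove, for every $z_0\in D$ and every holomorphic cylinder $P$ with $z_0+P\subset\subset D$, the inequality
\[
\phi(z_0)\leq\frac{1}{\mu(P)}\int_{z_0+P}\phi.
\]
When $\phi(z_0)=-\infty$ this is automatic, since $\phi$ is upper semi-continuous and hence bounded above on $\overline{z_0+P}$, so the right-hand side is a well-defined element of $[-\infty,+\infty)$. Thus I may assume $\phi(z_0)\neq-\infty$.

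First I would apply the multiple coarse $L^p$-extension property at $z_0$: for each $m\geq 1$ it produces $f_m\in\mathcal{O}(D)$ with $f_m(z_0)=1$ and $\int_D|f_m|^pe^{-m\phi}\leq C_me^{-m\phi(z_0)}$, where $\log C_m/m\to 0$. Restricting the integral to $z_0+P\subset D$, dividing by $\mu(P)$, and taking logarithms gives
\[
\log\Big(\frac{1}{\mu(P)}\int_{z_0+P}|f_m|^pe^{-m\phi}\Big)\leq\log\frac{C_m}{\mu(P)}-m\phi(z_0).
\]
Next, exactly as in the proof of Theorem~\ref{thm:sharp extension text}, I would apply Jensen's inequality to the concave function $\log$ and the probability measure obtained by normalizing Lebesgue measure on $z_0+P$, obtaining
\[
\frac{1}{\mu(P)}\int_{z_0+P}\big(p\log|f_m|-m\phi\big)\leq\log\frac{C_m}{\mu(P)}-m\phi(z_0).
\]
Since $f_m(z_0)=1$ and $\log|f_m|$ is plurisubharmonic, the sub-mean-value inequality (the ``only if'' direction of Lemma~\ref{psh}) yields $\frac{1}{\mu(P)}\int_{z_0+P}\log|f_m|\geq\log|f_m(z_0)|=0$; in particular this integral is finite, which forces $\int_{z_0+P}\phi>-\infty$ and makes all terms meaningful.

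Dropping the non-negative term $\frac{p}{\mu(P)}\int_{z_0+P}\log|f_m|$ and rearranging, I would arrive at
\[
\phi(z_0)\leq\frac{1}{m}\log\frac{C_m}{\mu(P)}+\frac{1}{\mu(P)}\int_{z_0+P}\phi.
\]
Letting $m\to\infty$ and using $\frac{1}{m}\log\frac{C_m}{\mu(P)}=\frac{\log C_m}{m}-\frac{\log\mu(P)}{m}\to 0$ gives the desired mean-value inequality, whence $\phi$ is plurisubharmonic by Lemma~\ref{psh}. The one genuinely new ingredient compared with Theorem~\ref{thm:sharp extension text} is this final limiting step: the sharp constant $\mu(P)^{-1}$ of the optimal extension property is here replaced by the unknown $C_m$, and the point is that after taking the $m$-th root (i.e.\ dividing by $m$) the hypothesis $\log C_m/m\to 0$ makes its contribution disappear, recovering the exact mean-value inequality. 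I expect the only subtlety requiring care to be the finiteness bookkeeping---ensuring that $\int_{z_0+P}\log|f_m|$ and $\int_{z_0+P}\phi$ are finite so that the splitting of the integral and the passage to the limit are legitimate---but this is settled by the plurisubharmonicity of $\log|f_m|$ together with the finiteness of the left-hand side coming from the extension estimate.
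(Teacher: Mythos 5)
Your proposal is correct and follows essentially the same route as the paper's own proof: apply the extension property to get $f_m$, restrict the estimate to the cylinder, use Jensen's inequality, discard the nonnegative term $\frac{p}{\mu(P)}\int_{z_0+P}\log|f_m|$ via the sub-mean-value property of $\log|f_m|$ with $f_m(z_0)=1$, divide by $m$, let $m\to\infty$, and conclude with Lemma~\ref{psh}. Your explicit handling of the case $\phi(z_0)=-\infty$ and of the finiteness of $\int_{z_0+P}\phi$ is a minor bookkeeping refinement that the paper leaves implicit.
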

\begin{proof}
Let $z_0\in D$ such that $\phi(z_0)\neq -\infty$,
and let $P$ be a holomorphic cylinder with $z_0+P\subset\subset D$.
By assumption, for any $m\geq1$,
there is a holomorphic function $f_m$ on $D$, with $f_m(z_0)=1$ and
\begin{equation*}%\label{}
\int_{z_0+P}|f_m|^pe^{-m\phi}\leq\int_D|f_m|^pe^{-m\phi}\leq C_m e^{-m\phi(z)}.
\end{equation*}
Dividing both sides by $\mu(P)$ and taking log, we get
$$\log\frac{1}{\mu(P)}\int_{z_0+P}|f_m|^pe^{-m\phi}\leq \log\frac{C_m}{ \mu(P)}-m\phi(z_0).$$
Then we have
\begin{equation}\label{thm 4 eq1}
\phi(z_0)\leq \frac{\log C_m}{m}-\frac{\log\mu(P)}{m}-\frac{1}{m}\log\frac{1}{\mu(P)}
\int_{z_0+P}|f_m|^pe^{-m\phi}
\end{equation}
By Jensen's inequality and
\begin{equation*}%\label{thm eq2}
\begin{split}
&-\frac{1}{m}\log\left(\frac{1}{\mu(P)}\int_{z_0+P}|f_m|^pe^{-m\phi}\right)\\
\leq&-\frac{1}{m}\frac{1}{\mu(P)}\int_{z_0+P}\log(|f_m|^pe^{-m\phi})\\
=&-\frac{1}{m}\frac{1}{\mu(P)}\int_{z_0+P}p\log|f_m|+
\frac{1}{\mu(P)}\int_{z_0+P}\phi.
\end{split}
\end{equation*}
Note that $p\log|f_m|$ is a plurisubharmonic function, and $f_m(z_0)=1$, the first term in the above equality is nonpositive,
therefore,
\begin{equation}\label{thm4 eq2}
-\frac{1}{m}\log\left(\frac{1}{\mu(P)}\int_{z_0+P}|f_m|^pe^{-m\phi}\right)\leq \frac{1}{\mu(P)}\int_{z_0+P}\phi.
\end{equation}
Combining inequalities \eqref{thm 4 eq1} and \eqref{thm4 eq2},
we get $$\phi(z_0)\leq \frac{\log C_m}{m}-\frac{\log\mu(P)}{m}+\frac{1}{\mu(P)}\int_{z_0+P}\phi.$$
As $\lim_{m\rightarrow\infty}\frac{\log C_m}{m}=0$ and $\lim_{m\rightarrow\infty}\frac{\log\mu(P)}{m}=0$,
taking limit, we have $$\phi(z_0)\leq \frac{1}{\mu(P)}\int_{z_0+P}\phi.$$
From Lemma \ref{psh}, $\phi$ is plurisubharmonic on $D$.

\end{proof}

%
%\bibliographystyle{abbrv}
%\bibliography{/Users/zhiwei/Paper/Bib/RefWang.bib}

\end{document}